\title
{\LARGE{\bfseries{On degenerate coupled transport processes in porous media with memory phenomena}}}
\date{}
\author{\large{
Michal Bene\v s\footnote{Department of Mathematics,
Faculty of Civil Engineering, Czech Technical University in Prague,
Th\'{a}kurova 7, 166 29 Prague 6, Czech Republic,
E-mail: michal.benes@cvut.cz}\;
and
Igor Pa\v{z}anin\footnote{Department of Mathematics, Faculty of Science, University of Zagreb,
Bijeni\v{c}ka 30, 10000 Zagreb, Croatia,
E-mail: pazanin@math.hr}
}}
 \newtheorem{thm}{Theorem}[section]
 \newtheorem{lem}[thm]{Lemma}
 \newtheorem{defn}[thm]{Definition}
 \newtheorem{rem}[thm]{Remark}
\numberwithin{equation}{section}
\newenvironment{proof}[1][Proof.]{\begin{trivlist}
\item[\hskip \labelsep {\bfseries #1}]}{\end{trivlist}}
\newcommand{\pom}{\partial\Omega}
\newcommand{\bfn}{\mbox{\boldmath{$n$}}}
\newcommand{\bfu}{\mbox{\boldmath{$u$}}}
\newcounter{constants}
\begin{document}

\maketitle

%
%
%
%
%
%
%
%
%

%



\begin{abstract}
In this paper we prove the existence of weak solutions
to degenerate parabolic systems
arising from the fully coupled moisture movement,
solute transport of dissolved species
and heat transfer
through porous materials.
Physically relevant mixed Dirichlet-Neumann boundary conditions
and initial conditions are considered.
Existence of a global weak solution
of the problem is proved by means of semidiscretization in time,
proving necessary uniform estimates
and by passing to the limit from discrete approximations.
Degeneration
occurs in the nonlinear transport coefficients
which are not assumed to be bounded below and above by positive
constants.
Degeneracies in transport coefficients
are overcome by proving suitable a-priori $L^{\infty}$-estimates based on
De Giorgi and Moser iteration technique.
\end{abstract}


\section{Introduction}
Let $\Omega$ be a bounded domain in $\mathbb{R}^2$, $\Omega \in
C^{0,1}$  and let $\Gamma_D$ and $\Gamma_N$ be open disjoint subsets of
$\pom$ (not necessarily connected) such that $\Gamma_D\neq\emptyset$
and the $\partial\Omega \backslash (\Gamma_D\cup\Gamma_N)$ is a
finite set. Let $T\in(0,\infty)$ be fixed throughout the paper, $I=(0,T)$
and $Q_T = \Omega\times I$ denotes the space-time cylinder,
$\Gamma_{DT} = \Gamma_D\times I$
and
$\Gamma_{NT}=\Gamma_N\times I$.
We shall study the following problem in $Q_{T}$:
\begin{align}
\partial_t [ \phi({x},r){S}(p) ]
&
=
\nabla\cdot[a({x},{p},{\vartheta},{r})\nabla p]
+
\alpha_1 f({x},p,c,\vartheta,r),
\label{strong:eq1a}
\\
\partial_t [\phi({x},r){S}(p) c]
&
=
\nabla\cdot[\phi({x},r){S}(p)D_w(x,p)\nabla c]
\nonumber
\\
&
\quad
+
\nabla\cdot[ c a({x},{p},{\vartheta},{r})\nabla { p}],
\label{strong:eq1b}
\\
\partial_t
\left[
\phi({x},r){S}(p) \vartheta + {\varrho}(x,r) \vartheta
\right]
&
=
\nabla \cdot [ \lambda({x},{p},{\vartheta},{r}) \nabla \vartheta ]
\nonumber
\\
&
\quad
+
\nabla\cdot
[\vartheta a({x},{p},{\vartheta},{r})\nabla p ]
+
\alpha_2 f({x},p,c,\vartheta,r),
\label{strong:eq1c}
\end{align}
where
\begin{equation}\label{def:transport_coef_a}
a({x},p,\vartheta,r) = \frac{k(x,r)k_{R}({S}(p))}{\mu(\vartheta)}.
\end{equation}

The system introduced above is coupled
with an integral condition
\begin{equation}\label{eq:memory_00}
r(x,t)
=
\int_0^t
f({x},p(x,s),c(x,s),\vartheta(x,s),r(x,s))
\,
{\rm d}s
\end{equation}
and completed by the mixed-type boundary conditions
\begin{align}
{p} &= 0,
\quad
{c} = 0,
\quad
\vartheta = 0
&&
{\rm on} \; \Gamma_{DT},
\label{strong:eq1f}
\\
\nabla {p}  \cdot \bfn
& = 0,
\quad
\nabla {c} \cdot \bfn
=
0,
\quad
\nabla \vartheta \cdot \bfn
=
0
&&
{\rm on} \;  \Gamma_{NT}
\label{strong:eq1i}
\end{align}
and the initial conditions
\begin{equation}
{p}(\cdot,0)  =  {p}_0,
\;
{c}(\cdot,0)  = {c}_0,
\;
\vartheta(\cdot,0)  = \vartheta_0
\qquad
{\rm in} \;  \Omega.
\label{strong:eq1l}
\end{equation}
The goal of this paper is to study the existence
of the so called weak solution to the degenerate fully coupled nonlinear system  \eqref{strong:eq1a}--\eqref{strong:eq1l}.
The problem under consideration covers a large range of problems including memory phenomena.
Namely, the system \eqref{strong:eq1a}--\eqref{strong:eq1l} arises from the coupled moisture movement,
transport of dissolved species and heat transfer
through the porous system \cite{Bear,PinderGray}.
Equations \eqref{strong:eq1a} and \eqref{strong:eq1b}
express the mass balance of water and dissolved species, respectively, in porous media
and  \eqref{strong:eq1c} represents the balance of heat energy in the porous system.
For simplicity, the gravity terms are not included since they do not affect the analysis.
For specific civil engineering applications,
we refer the reader to e.g. \cite{Gawin2006a,Ozbolt}.
Our problem has been motivated by doubly nonlinear systems appearing in
modelling of chemical reactions, heat transport and mass transfer in early age concrete
\cite{IshidaMaekawaKishi2007,Maekawa3,Maekawa4,Song2201}.

In \eqref{strong:eq1a}--\eqref{strong:eq1l},
${p} : Q_{T} \rightarrow \mathbb{R}$,
${c} : Q_{T} \rightarrow \mathbb{R}$,
$\vartheta : Q_{T} \rightarrow \mathbb{R}$
and
${r} : Q_{T} \rightarrow \mathbb{R}$
are the unknown functions.
In particular, $p$ corresponds to the water pressure,
$c$ represents concentration of dissolved species
and $\vartheta$ represents the temperature of the complete porous system.
Equations \eqref{strong:eq1a}, \eqref{strong:eq1c} and \eqref{eq:memory_00}
are encountered e.g. in the so called problem of ``hydratational heat''
when inner moisture sinks and heat sources are of special types.
In particular, the intensity of heat sources depends on the amount
of heat already developed, $f$ in \eqref{strong:eq1a} and \eqref{strong:eq1c}
depends on the unknown function
$r$ (the so called ``hydration degree'') given by \eqref{eq:memory_00}.
Further, $a: \Omega \times \mathbb{R}^3 \rightarrow \mathbb{R}$
represents the transport coefficient of the capillary water
given by \eqref{def:transport_coef_a}, where
$k$ is the intrinsic permeability,
$k_{R}$ denotes the relative permeability of the liquid water
and $\mu$ is the dynamic viscosity of the liquid water.
$D_w:\Omega \times\mathbb{R}\rightarrow \mathbb{R}$
is the capillary water diffusion coefficient,
$S:\mathbb{R}\rightarrow \mathbb{R}$
represents degree of saturation of the pores with liquid water,
$\phi:\Omega \times\mathbb{R}\rightarrow \mathbb{R}$ is porosity,
$\varrho:\Omega \times\mathbb{R}\rightarrow \mathbb{R}$
is the density of solid skeleton in the porous system.
Further,
$\lambda:\Omega \times\mathbb{R}^3\rightarrow \mathbb{R}$ is the thermal conductivity
of porous material. $\alpha_1$ and $\alpha_2$ are material constants.
Note that the density of water is assumed to be constant in the model and normalized to one.
$\bfn$ is the outward unit normal vector with respect to the boundary of $\Omega$.
Finally,
${p}_{0} : \Omega \rightarrow \mathbb{R}$,
${c}_{0} : \Omega \rightarrow \mathbb{R}$
and
$\vartheta_{0} : \Omega\rightarrow \mathbb{R}$
are given functions describing initial state of the system.

Typical forms of $S$ and $k_{R}$ can be found e.g. in
\cite{Genuchten1980,gerkegenuchten,gerkegenuchten_2,PinderGray}
with applications to water movement in soils or structured rock masses
or in \cite{Gawin2006a,Gawin2011a,Gawin2011b}
concerning transport processes in concrete.
It follows that positive functions $S(\cdot)$ and $k_{R}(S(\cdot))$ are typically increasing on $(-\infty,0)$ and
$S'(\cdot)$ and $k_{R}(S(\cdot))$ tend to zero as
$p \rightarrow -\infty$. Hence, \eqref{strong:eq1a} and \eqref{strong:eq1b}
are degenerate parabolic equations
where the degeneracy occurs in both elliptic as well as parabolic parts.
The degeneracy in the elliptic part of \eqref{strong:eq1a}
can be transformed only to the parabolic term
using the so called Kirchhoff transformation
\begin{displaymath}%
v := \kappa(p) = \int\limits_{0}^{p} k_{R}({S}(s))
{\rm d}s.
\end{displaymath}
The existence of the weak solution $v$ for the resulting transformed problem
follows from Alt and Luckhaus \cite{AltLuckhaus1983}.
However, due to the degeneracy of the problem
($k_{R}$ is not assumed to be bounded below by a positive constant)
we are not able to ensure that $p=\kappa^{-1}(v)$ solves the original problem.
Therefore we treat directly the doubly degenerate problem  \eqref{strong:eq1a}
and omit degeneracies proving $L^{\infty}$-estimates for the solutions of the approximate
problems.

\paragraph{A brief bibliographical survey.}
Nowadays, description of heat, moisture or soluble/non-soluble
contaminant transport in concrete, soil or rock porous matrix is
frequently based on time dependent models.
Coupled transport processes (diffusion processes, heat conduction, moister flow,
contaminant transport or coupled flows through porous media) are typically associated with
systems of strongly nonlinear degenerate parabolic partial differential
equations of type (written in terms of operators
${A}$, ${B}$, ${F}$)
\begin{equation}\label{eq:par1}
 \partial_t {B}(\bfu) -\nabla\cdot
{A}(\bfu,\nabla \bfu)
=
{F}(\bfu),
\end{equation}
where $\bfu$ stands for the unknown vector of state variables.
There is no complete theory for
such general problems. However,
some particular results assuming special structure of
operators ${A}$ and ${B}$ and growth
conditions on ${F}$ can be found in
the literature.
Most theoretical results on parabolic systems exclude the case of non-symmetrical parabolic parts \cite{AltLuckhaus1983,FiloKacur1995,Kacur1990a}.
Giaquinta and Modica in~\cite{GiaquintaModica1987} proved the local-in-time solvability of quasilinear
diagonal parabolic systems with nonlinear boundary conditions (without assuming any growth condition), see also \cite{Weidemaier1991}.
The existence of weak solutions to more general non-diagonal systems like \eqref{eq:par1} subject to mixed
boundary conditions has been proven in~\cite{AltLuckhaus1983}. The authors
proved an existence result assuming the operator ${B}$
to be only (weak) monotone and subgradient.
This
result has been extended in~\cite{FiloKacur1995}, where the authors presented
the local existence of the weak
solutions for the system with nonlinear Neumann boundary conditions
and under more general growth conditions on nonlinearities in $\bfu$.
These results, however, are not applicable if ${B}$ does not take the
subgradient structure, which is typical of coupled transport models in porous media.
Thus, the analysis needs to exploit the specific structure of such problems.
The existence
of a local-in-time strong solution
for moisture and heat transfer in multi-layer porous structures
modelling by the doubly nonlinear parabolic system
is provided in \cite{BenesZeman}.
In \cite{Vala2002}, the author
proved the existence of the solution to the purely diffusive
hygro-thermal model allowing non-symmetrical operators ${B}$, but requiring
non-realistic symmetry in the elliptic part.
In
\cite{degond,jungel2000}, the authors studied the existence, uniqueness and regularity
of coupled quasilinear equations modeling evolution of fluid species influenced by thermal, electrical
and diffusive forces.
In \cite{LiSun2010,LiSunWang2010,LiSun2012}, the authors studied a model of specific structure
of a heat and mass transfer arising from textile industry and
proved the global existence for one-dimensional problems in \cite{LiSun2010,LiSunWang2010}
and three-dimensional problems in \cite{LiSun2012}.
In \cite{harris2017}, the authors proved
the existence of the weak solutions to systems modeling the consolidation of saturated porous media.

In \cite{Pluschke1992,Pluschke1997}, the author proved the local existence of weak solutions
to degenerate quasilinear problems, where the coefficient function in front of the time derivative
may vanish at a set of zero measure. The main result is proved by means
of semi-discretization in time and proving $L^{\infty}$-estimates for approximates
in order to omit a growth limitations in nonlinearities and the right hand side.

In \cite{Ning1993}, the author
studied an initial boundary value problem for the nonlinear degenerate
parabolic equation of type \eqref{eq:par1} with evolutionary boundary conditions.
Existence and uniqueness
were established through some discrete schemes combined with parabolic regularization
and error estimates for these schemes were presented.
In a slightly different form, taking $\phi = 1$ and $f=0$
and assuming different degeneration features,
problem \eqref{strong:eq1a}--\eqref{strong:eq1b}
was studied in \cite{Ning1990c,Ning1990,Ning1990b}.

From the numerical point of view,
scalar degenerate problems similar to \eqref{strong:eq1a}
were treated in  \cite{Kacur1999,Kacur2001}. The author proposed
a nonstandard  approximation scheme based on the
relaxation method in order to control the degeneracy in the problem.

A profound investigation of problems with integral conditions
connected with equations \eqref{strong:eq1a} and \eqref{eq:memory_00}
can be found in \cite{Doktor1985} and \cite{Rektorys}
in case of nondegenerate linear elliptic and parabolic parts
and in \cite{Kacur1999} assuming strongly nonlinear and degenerate
scalar parabolic problems.

\paragraph{Outline of the paper.}
In the present paper we extend our previous existence result for coupled heat and
mass flows in porous media \cite{BenesKrupicka2016}
to more general degenerate problem
modeling coupled moisture, solute and heat transport in porous media
including memory phenomena.
This leads to a fully nonlinear degenerate parabolic system
coupled with an integral condition
and including natural (critical) growths
and with degeneracies in transport coefficients.
The rest of this paper is organized as follows.  In Section~\ref{sec:prelim},
we briefly introduce basic notation and suitable function spaces
and specify our assumptions on data
and coefficient functions in the problem.
In Section~\ref{sec:main_result}, we formulate the problem in the
variational sense and state the main result, the global-in-time
existence of the weak solution to the problem \eqref{strong:eq1a}--\eqref{strong:eq1l}.
The main result is proved by an approximation procedure in Section~\ref{sec:proof_main}.
First, we formulate the semi-discrete scheme and prove
the existence of the solution to the corresponding recurrence steady problem.
The crucial a-priori estimates and uniform boundness
of time discrete approximations are proved in Section~\ref{sec:estimates}.
Finally, we conclude that
solutions of the semi-discrete scheme converge and that the limit
corresponds to the solution of the original problem.

\begin{rem}
The present analysis can be straightforwardly extended to a setting with
nonhomogeneous boundary conditions \eqref{strong:eq1f}
(see \cite{BenesKrupicka2016} for details or
\cite[Paragraph~1.10 on page 324]{AltLuckhaus1983}).
Here we work with homogeneous boundary conditions to
avoid unnecessary technicalities in the existence result.
\end{rem}

\section{Preliminaries}
\label{sec:prelim}
\subsection{Notations and some function spaces}
\label{notations}
Vectors and vector functions are denoted by boldface letters.
Throughout the paper, we will always use positive constants ${C}$,
${C}_1$, ${C}_2$, $\dots$, which are not specified and which may
differ from line to line.
In what follows, we suppose
$s,q,s'\in [1,\infty]$, $s'$ denotes the conjugate exponent to $s>1$,
${1}/{s} + {1}/{s'} = 1$.
$L^s(\Omega)$ represents the usual
Lebesgue space equipped with the norm $\|\cdot\|_{L^s(\Omega)}$ and
$W^{k,s}(\Omega)$, $k\geq 0$ ($k$ need not to be an integer, see
\cite{KufFucJoh1977}), denotes the usual
Sobolev-Slobodecki space with the norm $\|\cdot\|_{W^{k,s}(\Omega)}$.
We define
$
W^{1,2}_{\Gamma_D}(\Omega)
:=
\left\{
v \in W^{1,2}(\Omega); \,   v \big|_{\Gamma_D} = 0
\right\}$.
By  $E^*$ we denote the space of all continuous, linear forms on Banach space $E$
and by $\langle \cdot,\cdot \rangle$ we denote the
duality between $E$ and $E^*$.
By $L^s(I;E)$ we denote the Bochner space (see \cite{AdamsFournier1992}).
Therefore, $L^s(I;E)^*=L^{s'}(I;E^*)$.

\subsection{Structure and data properties}
\label{Structure and data properties}

We start by introducing several structural assumptions on functions in
\eqref{strong:eq1a}--\eqref{strong:eq1l}:
\begin{itemize}

\item[(i)]
$S \in C^{1}(\mathbb{R})$ is a positive and strictly monotone function such that
\begin{align}
& 0 < S(\xi) \leq S_{s} < +\infty
&&
\forall \xi \in \mathbb{R} \quad (S_{s} = {\rm const}),
\label{con11a}
\\
&  \left(S(\xi_1)-S(\xi_2)\right)(\xi_1 - \xi_2) > 0
&&
\forall \xi_1,\xi_2 \in \mathbb{R}, \;  \xi_1 \neq \xi_2.
\label{con11b}
\end{align}

\item[(ii)]
$a$, $a_1$, $k$, $k_{R}$, $\mu$, $\varrho$, $D_w$, $\lambda$ are continuous functions, $a_1$ strictly increasing,
satisfying
\begin{align}
&
0 < k_1 \leq k(x,\xi) \leq k_2 < +\infty \quad (k_1,k_2 = {\rm const})
&&
\forall \xi \in \mathbb{R},
\;
{x} \in \Omega,
\label{cond:intr_perm}
\\
&
k_{R} \in C([0,S_s]), \;
\left(k_{R}(\xi_1)-k_{R}(\xi_2)\right)(\xi_1 - \xi_2) > 0
&&
\forall \xi_1,\xi_2 \in [0,S_s], \;
\label{cond:rel_perm_I}
\\
&
&&
\xi_1 \neq \xi_2,
\nonumber
\\
&
0 <  k_{R}(\xi)
&&
\forall \xi \in [0,S_s],
\label{cond:rel_perm_II}
\\
&
0 <  \mu_1 \leq  \mu(\xi) \leq  \mu_2 < +\infty \quad ( \mu_1, \mu_2 = {\rm const})
&&
\forall \xi \in \mathbb{R},
\label{cond:viscosity}
\\
&
0 < \varrho_1 \leq  \varrho(x,\xi) \leq  \varrho_2 < +\infty
\quad
( \varrho_1, \varrho_2 = {\rm const})
&&
\forall \xi \in \mathbb{R},
\;
x \in \Omega,
\label{cond:density}
\\
&
0 < a_1(\xi_1) \leq a({x},\xi_1,\xi_2,\xi_3) \leq a_2 < +\infty
&&
\forall \xi_1,\xi_2,\xi_3 \in \mathbb{R},
\label{con12a}
\\
&
&&
x \in \Omega \quad (a_2 =  {\rm const}),
\nonumber
\\
&
0 <  D_w({x},\xi)
&&
\forall \xi \in \mathbb{R},
\;
x \in \Omega,
\label{con12b}
\\
&
0 < \lambda({x},\xi_1,\xi_2,\xi_3)
&&
\forall \xi_1,\xi_2,\xi_3 \in \mathbb{R},
\label{con12c}
\\
&
&&
\;
x \in \Omega.
\nonumber
\end{align}

\item[(iii)] The function $\phi$ is Lipschitz continuous with respect
to the second variable, i.e.
there exists a constant $C_{\phi}>0$ such that
\begin{align}\label{lipschitz_phi}
| \phi(x,\xi_1)-\phi(x,\xi_2) |
&
\leq
C_{\phi}
|\xi_1 - \xi_2|
&&
\forall \xi_1,\xi_2 \in \mathbb{R},
\;
x \in \Omega
\end{align}
and
\begin{equation}\label{bound_phi}
0 < \phi_1 \leq \phi(x,\xi) \leq \phi_2 < +\infty
\quad
\forall \xi \in \mathbb{R},
\;
{x} \in \Omega
\quad
(\phi_1,\phi_2 = {\rm const}).
\end{equation}

\item[(iv)]
$f$ is Lipschitz continuous in all respective variables and
there exists an increasing positive bounded function $\tilde{f}$ such that
$(C_f = {\rm const})$
\begin{equation}\label{assum:bound_f}
|f({x},\xi_1, \xi_2, \xi_3, \xi_4)|
\leq
\tilde{f}({\xi_1})
\leq
C_f
\quad
\forall \xi_1, \xi_2, \xi_3, \xi_4 \in \mathbb{R},
\;
x \in \Omega.
\end{equation}

\item[(v)]
We assume that there exists a non-increasing positive function $M$ such that
\begin{equation}\label{def:M}
M(\xi) \leq \frac{a_1(\xi)}{S'(\xi)} \qquad \forall\xi \in \mathbb{R}
\end{equation}

and
\begin{equation}
\lim_{\delta\rightarrow 0_{+}}
\frac{\tilde{f}({S^{-1} (\delta)})}{M({S^{-1} (\delta)})\delta} = 0.
\end{equation}

\item[(vi)] (Initial data)
Assume ${c}_{0}$, ${\vartheta}_{0} \in W^{1,2}\cap L^{\infty}(\Omega)$
and
${p}_{0} \in L^{\infty}(\Omega)$
such that
\begin{equation}\label{cond:lower_bound_pressure}
-\infty < {p}_{1} < {p}_0(\cdot) \leq 0   \qquad \textmd{ a.e. in } \Omega \quad
({p}_{1}={\rm const}).
\end{equation}

\end{itemize}
Throughout the paper the hypotheses (i)--(vi) will be assumed.

\section{The main result}\label{sec:main_result}
The aim of this paper is to prove the existence of a weak solution to the problem
\eqref{strong:eq1a}--\eqref{strong:eq1l}.
We first reformulate the problem in a variational sense.
\begin{defn}
\label{def_weak_solution}
A weak solution of \eqref{strong:eq1a}--\eqref{strong:eq1l}
is a foursome $[p,c,\vartheta,r]$ such that
\begin{align*}
& p \in L^2(I;W_{\Gamma_D}^{1,2}(\Omega)) ,
\\
& c \in  L^2(I;W_{\Gamma_D}^{1,2}(\Omega)) \cap L^{\infty}({Q_T}),
\\
& \vartheta \in  L^2(I;W_{\Gamma_D}^{1,2}(\Omega)) \cap L^{\infty}({Q_T}),
\\
&   r \in {C([0,T];L^{\infty}(\Omega))}
\end{align*}
and
\begin{multline}
\label{weak_form_01}
-\int_{Q_T}
\phi({x},r){S}(p) \partial_t\zeta
{\,{\rm d}x} {\rm d}t
+
\int_{Q_T}
{a({x},p,\vartheta,r){\nabla p}} \cdot\nabla\zeta
{\,{\rm d}x} {\rm d}t
\\
=
\int_{\Omega}   \phi({x},r_{0}){S}(p_{0}) \zeta(x,0) {\,{\rm d}x}
+
\int_{Q_T}
\alpha_1 f({x},p,c,\vartheta,r)
\zeta
{\,{\rm d}x} {\rm d}t
\end{multline}
for any  $\zeta \in {L^2(I;W^{1,2}_{\Gamma_D}(\Omega))\cap W^{1,1}(I;L^{1}(\Omega))}$
with $\zeta(\cdot,T)=0$;

\begin{multline}
\label{weak_form_02}
-\int_{Q_T}  \phi({x},r){S}(p) c \, \partial_t\eta
{\,{\rm d}x} {\rm d}t
+
\int_{Q_T}
\phi({x},r){S}(p)D_w(x,p)\nabla c \cdot\nabla \eta
{\,{\rm d}x} {\rm d}t
\\
+
\int_{Q_T}
 c \, a({x},{p},{\vartheta},{r})\nabla { p} \cdot\nabla\eta
{\,{\rm d}x} {\rm d}t
=
\int_{\Omega} \phi({x},r_{0}){S}(p_{0}) {c}_0 \, \eta(x,0) {\,{\rm d}x}
\end{multline}
for any  $\eta \in L^2(I;W^{1,2}_{\Gamma_D}(\Omega))\cap W^{1,1}(I;L^{1}(\Omega))$
with $\eta(\cdot,T)=0$;

\begin{align}
\label{weak_form_03}
&
\quad
-\int_{Q_T}
\left[
\phi({x},r){S}(p) + {\varrho}(x,r)
\right]
\vartheta \,
\partial_t\psi
{\,{\rm d}x} {\rm d}t
\nonumber
\\
&
\quad
\int_{Q_T}
\lambda({x},{p},{\vartheta},{r}) \nabla\vartheta \cdot \nabla\psi
{\,{\rm d}x} {\rm d}t
+
\int_{Q_T}
\vartheta {a({x},{p},{\vartheta},{r}) {\nabla p}}
\cdot \nabla\psi
{\,{\rm d}x} {\rm d}t
\nonumber
\\
&
=
\int_{\Omega}
[  \phi({x},r_{0}){S}(p_{0}) + {\varrho}(x,{r}_0)]  \vartheta_{0}  \psi(x,0) {\,{\rm d}x}
+
\int_{Q_T}
\alpha_2 f({x},p,c,\vartheta,r)
\psi
{\,{\rm d}x} {\rm d}t
\end{align}
for any  $\psi \in L^2(I;W^{1,2}_{\Gamma_D}(\Omega))\cap W^{1,1}(I;L^{1}(\Omega))$
with $\psi(\cdot,T)=0$,
where
\begin{equation}
\label{eq:memory_weak_form}
r(t)
=
\int_0^t
f({x},p(x,s),c(x,s),\vartheta(x,s),r(x,s))
\,
{\rm d}s
\quad
\textmd{ in }  {L^{\infty}(\Omega)}
\textmd{ for all } t \in [0,T].
\end{equation}
\end{defn}

The main result of this paper reads as follows:
\begin{thm}[Main result]\label{main_result}
Let the assumptions {\rm (i)--(vi)} be satisfied.
Then there exists at
least one weak solution of the
system  \eqref{strong:eq1a}--\eqref{strong:eq1l}.
\end{thm}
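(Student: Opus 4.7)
My plan is to follow the Rothe (semi-discretization in time) scheme sketched in the introduction, in four phases. Fix $N \in \mathbb{N}$, $\tau = T/N$ and $t_n = n\tau$. Starting from $p_0, c_0, \vartheta_0$ and $r_0 = 0$, I would construct iteratively a sequence $(p_n, c_n, \vartheta_n, r_n)_{n=1}^{N}$ solving the backward-Euler elliptic system associated with \eqref{strong:eq1a}--\eqref{strong:eq1c}, together with the discrete memory relation $r_n = r_{n-1} + \tau\, f(\cdot, p_n, c_n, \vartheta_n, r_{n-1})$. Existence at each time step is obtained via a Schauder fixed-point argument: freeze $(\tilde p, \tilde c, \tilde\vartheta)$ inside the nonlinear coefficients $a, \lambda, D_w, \phi, \varrho, f$, solve the three scalar elliptic problems (the pressure equation by monotone-operator theory, using the strict monotonicity of $S$ from \eqref{con11b} and the positivity $a_1 > 0$ from \eqref{con12a}; the concentration and temperature equations linearly, since their coefficients are then frozen functions of $x$), update $r$ from the memory relation (a contraction for $\tau$ small enough thanks to \eqref{assum:bound_f}), and close the loop using continuity in the frozen arguments.

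The heart of the argument is the uniform control of these approximations as $N \to \infty$. Standard discrete energy estimates, obtained by testing the pressure equation with a primitive of $S(p_n)$, the concentration equation with $c_n$, and the temperature equation with $\vartheta_n$, yield $L^2(I; W^{1,2}_{\Gamma_D})$ bounds on gradients and weighted $L^\infty(I; L^2)$ bounds — \emph{provided} the transport coefficients remain non-degenerate. The crucial and hardest step is therefore proving, uniformly in $N$, a pointwise lower bound $p_n \geq P_\star > -\infty$. I would obtain it by a De Giorgi iteration on the discrete pressure equation, testing with truncations of the form $(S^{-1}(\delta) - p_n)_+$ for small $\delta > 0$. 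Assumption (v) is tailored precisely for this mechanism: the bound $M(\xi) \leq a_1(\xi)/S'(\xi)$ converts diffusive dissipation into control of the truncated level-set measure, while the smallness $\tilde f(S^{-1}(\delta))/[M(S^{-1}(\delta))\delta] \to 0$ as $\delta \to 0^+$ guarantees that the source is subordinate to diffusion on any level set approaching the degenerate regime, closing the iteration. A parallel Moser iteration on the $c_n$ and $\vartheta_n$ equations — now with uniformly elliptic coefficients because of the lower bound on $p_n$ — then yields $L^\infty(Q_T)$ bounds on $c^N$ and $\vartheta^N$ required in Definition \ref{def_weak_solution}. This $L^\infty$-machinery is the main technical obstacle, and the reason the argument bypasses the Kirchhoff transformation.

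With the degeneracy thus tamed in a uniform sense, bounds on discrete time differences in $L^2(I; (W^{1,2}_{\Gamma_D})^*)$ follow directly by comparison in the equations. I would then introduce piecewise constant and piecewise affine Rothe interpolants and extract a subsequence with $p^N, c^N, \vartheta^N \rightharpoonup p, c, \vartheta$ weakly in $L^2(I; W^{1,2}_{\Gamma_D})$, weakly-$*$ in $L^\infty(Q_T)$, and strongly in $L^2(Q_T)$ (and a.e.) by Aubin--Lions; the memory interpolant $r^N$ converges uniformly in $C([0,T]; L^\infty(\Omega))$, since $|f| \leq C_f$ and $f$ is Lipschitz in all arguments. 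Continuity of the composite nonlinearities $S$, $\phi$, $\varrho$, $a$, $\lambda$, $D_w$ and $f$ together with the a.e.\ convergence and dominated convergence let me pass to the limit inside them; the mixed convective terms $c\, a(\cdot, p, \vartheta, r)\nabla p$ and $\vartheta\, a(\cdot, p, \vartheta, r)\nabla p$ are handled by weak-strong pairings (strong convergence of $c^N, \vartheta^N$ and the coefficients against weak convergence of $\nabla p^N$). Passing to the limit in the discrete weak form recovers \eqref{weak_form_01}--\eqref{weak_form_03} and \eqref{eq:memory_weak_form}, completing the proof of Theorem \ref{main_result}.
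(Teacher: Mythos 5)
Your overall strategy (Rothe semi-discretization, De~Giorgi iteration to pin a uniform lower bound on the pressure approximations via Assumption~(v), Moser-type iteration for $L^\infty$ bounds on $c_n$ and $\vartheta_n$, energy and time-translation estimates, compactness and passage to the limit) is exactly the architecture of the paper's proof, and you have correctly identified that Assumption~(v) is tailored for closing the De~Giorgi iteration. Two aspects of your proposal, however, differ from the paper and deserve scrutiny, and a third is a genuine gap.

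On the per-time-step solve, you propose a Schauder fixed-point with frozen coefficients. The paper instead designs a \emph{semi-implicit} scheme (see \eqref{approximate_problem_01}--\eqref{approximate_problem_04a}) in which $a$ and $f$ carry the current $p_n^i$ but the previous-step $c_n^{i-1},\vartheta_n^{i-1},r_n^{i-1}$, so within each time level the system decouples: the pressure equation is a single scalar quasilinear problem for $p_n^i$, then $r_n^i$ is explicit from \eqref{eq:mamory_mod_01}, and the concentration and temperature equations become linear in $c_n^i$ and $\vartheta_n^i$. The pressure step is solved by combining the a-priori lower bound (Theorem~\ref{thm:max_principle_pressure}) with truncation of $k_R$ and a Kirchhoff transform, which turns it into a Laplace-type equation handled by pseudomonotone operator theory and then $W^{1,s}$-regularity \cite{gallouet}. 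Your Schauder route can be made to work, but be aware that before the $L^\infty$ bound is in hand the frozen-coefficient operator is not uniformly elliptic, so your invariant convex set already has to encode $\tilde p \geq \ell$; at that point you are implicitly running the same max-principle argument, and the decoupled scheme simply avoids this bookkeeping. Also note the paper keeps $p_n^i$ (not $p_n^{i-1}$) in $f$, which is essential so that the source bound $\tilde f(k)$ in \eqref{assum:bound_f} is evaluated on the level set $\{p_n^i < k\}$ in the De~Giorgi step; with $f$ fully frozen this would not chain. A smaller technical point: the correct De~Giorgi test function is $(S(p_n^i)-S(k))_-$ rather than a truncation of $p_n$ itself, because one needs the dissipation to produce $\tfrac{a}{S'}|\nabla(S(p)-S(k))_-|^2$, which Assumption~(v) bounds below by $M(k)|\nabla(S(p)-S(k))_-|^2$ and hence feeds into the Sobolev embedding.

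Two points are genuine gaps. First, the $L^\infty$ bound on $c_n$ cannot be obtained by Moser iteration on the concentration equation alone: the convective term $\int c\,a\nabla p\cdot\nabla\eta$ does not have a sign when you test with powers of $c$. The paper tests \eqref{approximate_problem_02} with $(c_n^i)^\ell$ \emph{and} \eqref{approximate_problem_01} with $\tfrac{\ell}{\ell+1}(c_n^i)^{\ell+1}$, and the two convective contributions cancel (see \eqref{eq:501}--\eqref{eq:503}); without this coupling the iteration does not close. Second, your compactness step as written does not work: you propose to bound $\partial_t p^N$ in $L^2(I;(W^{1,2}_{\Gamma_D})^*)$ and invoke Aubin--Lions, but the parabolic term is $\partial_t[\phi(\cdot,r)S(p)]$, not $\partial_t p$, so comparison in the equation bounds $\partial_t[\phi S(p^N)]$ only. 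Moreover $\phi S(p^N)$ is not bounded in $L^2(I;W^{1,2}(\Omega))$ unless $\phi$ has spatial regularity not assumed in~(iii), so Aubin--Lions cannot be applied directly to $\phi S(p^N)$ either. The paper instead derives the translation-in-time estimate \eqref{est:704} (equivalently \eqref{est:806}), which combined with the spatial $L^2(W^{1,2})$ bound feeds the Alt--Luckhaus compactness lemma \cite[Lemma~1.9]{AltLuckhaus1983} to yield $S(\bar p_n)\to S(p)$ in $L^1(Q_T)$ and a.e., and then strict monotonicity of $S$ gives $\bar p_n\to p$ a.e. You should replace the Aubin--Lions step by this translation-estimate argument (and similarly for the $c$ and $\vartheta$ interpolants, whose parabolic terms are $\phi S(p) c$ and $(\phi S(p)+\varrho)\vartheta$).
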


To prove the main result of the paper we use the method
of semidiscretization in time by constructing temporal approximations
and limiting procedure.
The proof can be divided into three steps.
In the first step, we approximate our problem by means of a semi-implicit time discretization
scheme (which preserve the pseudo-monotone structure of the discrete problem)
and prove the existence and $W^{1,s}(\Omega)$-regularity (with some $s>2$) of discrete approximations.
In the second step we derive necessary a-priori estimates.
The key point is to establish $L^{\infty}$-estimates to overcome degeneracies in transport coefficients.
Finally, in the third step we construct temporal interpolants and
pass to the limit from discrete approximations.

\section{Proof of the main result}\label{sec:proof_main}
\subsection{Approximations}\label{sec:approximations}

Applying the method of discretization in time, we divide the interval
$[0,T]$ into $n$ subintervals of lengths  ${h}:= T/n$ (a time step),
replace the time derivatives
by the corresponding difference quotients
and the integral in \eqref{eq:memory_weak_form} by a sum.
In this way,
we approximate the problem \eqref{strong:eq1a}--\eqref{strong:eq1l}
by a semi-implicit time discretization scheme
and re-formulate the problem in a weak sense.

Let us consider
$p^0_{n} := p_{0}$,
$c^0_{n} := c_{0}$,
$\vartheta^{0}_{n} := \vartheta_{0}$
and
$r^0_{n} := {0}$ a.e. on $\Omega$.
We now define,
in each time step $i=1,\dots,n$,
a foursome
$[p^{i}_{n},c^{i}_{n},\vartheta^{i}_{n},r^{i}_{n}]$
as a solution of the
following recurrence steady problem:
for a given foursome
$[p^{i-1}_{n},c^{i-1}_{n},\vartheta^{i-1}_{n},r^{i-1}_{n}]$,
$i=1,2,\dots,n$,
${p}^{i-1}_{n} \in  L^{\infty}(\Omega)$,
${c}^{i-1}_{n} \in W^{1,2}(\Omega) \cap L^{\infty}(\Omega)$,
$\vartheta^{i-1}_{n} \in  W^{1,2}(\Omega) \cap L^{\infty}(\Omega)$
and
${r}^{i-1}_{n} \in  W^{1,2}(\Omega) \cap L^{\infty}(\Omega)$,
find  $[p^{i}_{n},c^{i}_{n},\vartheta^{i}_{n},r^{i}_{n}]$,
such that
${p}^{i}_{n} \in  W_{\Gamma_D}^{1,2}(\Omega) \cap L^{\infty}(\Omega)$,
${c}^{i}_{n} \in  W_{\Gamma_D}^{1,2}(\Omega) \cap L^{\infty}(\Omega)$,
$\vartheta^{i}_{n} \in  W_{\Gamma_D}^{1,2}(\Omega) \cap L^{\infty}(\Omega)$,
${r}^{i}_{n} \in  W^{1,2}(\Omega) \cap L^{\infty}(\Omega)$
and
\begin{align}
\label{approximate_problem_01}
&
\int_{\Omega}
\frac{ \phi({x},{r}^{i}_{n}){S}({p}^{i}_{n}) - \phi({x},{r}^{i-1}_{n}){S}({p}^{i-1}_{n}) }{h} \zeta
{\,{\rm d}x}
\nonumber
\\
&
+
\int_{\Omega}
a({x},{{p}_{n}^{i}},\vartheta_{n}^{i-1},{r}_{n}^{i-1})
\nabla {p}_{n}^{i}
\cdot\nabla\zeta
{\,{\rm d}x}
\nonumber
\\
=
&
\int_{\Omega}
\alpha_1 f({x},{{p}_{n}^{i}},{c}_{n}^{i-1},\vartheta_{n}^{i-1},{r}_{n}^{i-1})
\zeta
{\,{\rm d}x}
\end{align}
for any $\zeta \in {W_{\Gamma_D}^{1,2}(\Omega)}$;

\begin{align}\label{approximate_problem_02}
&
\quad
\int_{\Omega}
\frac{ \phi({x},{r}^{i}_{n}){S}({p}^{i}_{n}){c}^{i}_{n}
-
\phi({x},{r}^{i-1}_{n}){S}({p}^{i-1}_{n}){c}^{i-1}_{n} }{h} \eta
{\,{\rm d}x}
\nonumber
\\
&
\quad
+
\int_{\Omega}
\phi({x},{r}^{i}_{n}){S}({p}^{i}_{n})  D_w(x,{p}_{n}^{i})
\nabla {c}^{i}_{n} \cdot \nabla\eta
{\,{\rm d}x}
\nonumber
\\
&
\quad
+
\int_{\Omega}
{c}^{i}_{n} a({x},{{p}_{n}^{i}},\vartheta_{n}^{i-1},{r}_{n}^{i-1})
\nabla {p}_{n}^{i}
\cdot \nabla \eta
{\,{\rm d}x}
\nonumber
\\
&
=
0
\end{align}
for any  $\eta \in {W_{\Gamma_D}^{1,2}(\Omega)}$;

\begin{align}
\label{approximate_problem_03}
&
\int_{\Omega}
\frac{
\phi({x},{r}^{i}_{n}){S}({p}^{i}_{n}) \vartheta^{i}_{n}
-
\phi({x},{r}^{i-1}_{n}){S}({p}^{i-1}_{n}) \vartheta_{n}^{i-1} }{h}
\psi
{\,{\rm d}x}
\nonumber
\\
&
+
\int_{\Omega}
\frac{ \varrho(x,r_{n}^{i})\vartheta_{n}^{i} - \varrho(x,r_{n}^{i-1})\vartheta_{n}^{i-1} }{h}  \psi
{\,{\rm d}x}
\nonumber
\\
&
+
\int_{\Omega}
\lambda({x},{{p}_{n}^{i-1}},\vartheta_{n}^{i-1},{r}_{n}^{i-1}) \nabla \vartheta_{n}^{i} \cdot \nabla\psi
{\,{\rm d}x}
\nonumber
\\
&
+
\int_{\Omega}
\vartheta_{n}^{i}
a({x},{{p}_{n}^{i}},\vartheta_{n}^{i-1},{r}_{n}^{i-1})
\nabla {p}_{n}^{i}
\cdot \nabla\psi
{\,{\rm d}x}
\nonumber
\\
=
&
\int_{\Omega}
\alpha_2 f({x},{{p}_{n}^{i}},{c}_{n}^{i-1},\vartheta_{n}^{i-1},{r}_{n}^{i-1})
\psi
{\,{\rm d}x}
\end{align}
for any $\psi \in {W_{\Gamma_D}^{1,2}(\Omega)}$
and
\begin{align}
&
r_{n}^{i}
=
{h} \sum_{j=1}^{i}
f({x},{p}_{n}^{j},{c}_{n}^{j-1},\vartheta_{n}^{j-1},{r}_{n}^{j-1}),
\quad
i=1,\dots,n,
\label{approximate_problem_04a}
\\
&
r_{n}^{0}(x) = 0.
\label{approximate_problem_04b}
\end{align}

\begin{thm}[Existence of the solution to
\eqref{approximate_problem_01}--\eqref{approximate_problem_04a}]
\label{thm:aprox}
Let
${p}^{i-1}_{n} \in  L^{\infty}(\Omega)$,
${c}^{i-1}_{n} \in W^{1,2}(\Omega) \cap L^{\infty}(\Omega)$,
$\vartheta^{i-1}_{n} \in W^{1,2}(\Omega) \cap L^{\infty}(\Omega)$
and
${r}^{i-1}_{n} \in  W^{1,2}(\Omega) \cap L^{\infty}(\Omega)$
be given and the Assumptions {\rm (i)--(vi)} be satisfied.
Then there exists $[{p}^{i}_{n},{c}^{i}_{n},\vartheta^{i}_{n},{r}^{i}_{n}]$,
such that
${p}^{i}_{n} \in  W_{\Gamma_D}^{1,s}(\Omega)$,
${c}^{i}_{n} \in  W_{\Gamma_D}^{1,s}(\Omega)$,
$\vartheta^{i}_{n} \in  W_{\Gamma_D}^{1,s}(\Omega)$ with some $s>2$,
and ${r}^{i}_{n} \in  W^{1,2}(\Omega) \cap L^{\infty}(\Omega)$,
satisfying
\eqref{approximate_problem_01}--\eqref{approximate_problem_04a}.
\end{thm}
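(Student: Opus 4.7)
The plan is to exploit the cascade structure of the system \eqref{approximate_problem_01}--\eqref{approximate_problem_04a}. The recursion \eqref{approximate_problem_04a} yields the explicit formula
\[
r^i_n = r^{i-1}_n + h\, f(\,\cdot\,, p^i_n, c^{i-1}_n, \vartheta^{i-1}_n, r^{i-1}_n),
\]
so that $r^i_n$ depends on the current-step unknowns only through $p^i_n$. Substituting this into \eqref{approximate_problem_01} closes the pressure equation as a single quasilinear elliptic equation for $p^i_n$. Once $p^i_n$ is obtained, $r^i_n$ is determined pointwise; equation \eqref{approximate_problem_02} reduces to a linear elliptic problem for $c^i_n$, and \eqref{approximate_problem_03} to a linear elliptic problem for $\vartheta^i_n$.

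\textbf{Pressure equation.} The central step is to solve the scalar quasilinear equation for $p^i_n$. I would cast it as an abstract equation $\mathcal{A}_h(p^i_n)=F$ on $V := W^{1,2}_{\Gamma_D}(\Omega)$ and apply the surjectivity theorem for bounded pseudo-monotone coercive operators (Brezis--Lions). The leading part $-\nabla\cdot[a(\cdot,p^i_n,\vartheta^{i-1}_n,r^{i-1}_n)\nabla p^i_n]$ is bounded from above by \eqref{con12a} and, after an auxiliary $\varepsilon$-truncation $a\mapsto\max(a,\varepsilon)$ to compensate the possibly degenerate lower bound $a_1$, is uniformly elliptic; it is pseudo-monotone via the compact embedding $W^{1,2}(\Omega)\hookrightarrow L^q(\Omega)$ valid for every finite $q$ in two dimensions, together with the continuity of $a$ and of $S$. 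The accumulation term $\phi(\cdot,r^i_n(p^i_n))S(p^i_n)/h$ is a bounded continuous superposition operator $V\to V^*$: its dominant part is monotone by \eqref{con11b} and positivity of $\phi$, while the $p^i_n$-dependence of $r^i_n$ produces an $O(h)$-Lipschitz perturbation controlled by \eqref{lipschitz_phi} and the bound \eqref{assum:bound_f} on $f$. A uniform $L^\infty$-bound on the $\varepsilon$-regularized solutions, obtained by a discrete De\,Giorgi--Moser argument exploiting the structural assumption (v), then allows the passage $\varepsilon\to 0$ by Minty's trick.

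\textbf{Remaining unknowns.} With $p^i_n$ constructed, $r^i_n$ is defined pointwise by \eqref{approximate_problem_04a}: the bound $\|r^i_n\|_{L^\infty(\Omega)}\leq T C_f$ is immediate from \eqref{assum:bound_f}, while the $W^{1,2}$-regularity follows by differentiating the sum and combining Lipschitz continuity of $f$ in all its arguments with the $W^{1,2}$-regularity of $p^j_n, c^{j-1}_n, \vartheta^{j-1}_n, r^{j-1}_n$ for $j\leq i$. Equation \eqref{approximate_problem_02} is then linear in $c^i_n$ with bounded, strictly elliptic diffusion coefficient $\phi(\cdot,r^i_n)S(p^i_n)D_w(\cdot,p^i_n)$ (using \eqref{con11a}, \eqref{con12b}, \eqref{bound_phi} on the now-frozen profile $p^i_n$) and strictly positive zero-order coefficient $\phi(\cdot,r^i_n)S(p^i_n)/h$; after absorbing the convective term $c^i_n a\nabla p^i_n$ by H\"older's and Sobolev's inequalities, Lax--Milgram yields $c^i_n\in V$. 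The temperature equation \eqref{approximate_problem_03} is handled identically, since $\varrho(\cdot,r^i_n)\geq\varrho_1>0$ by \eqref{cond:density} reinforces the zero-order ellipticity.

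\textbf{Higher integrability and main obstacle.} The improved regularity $p^i_n,c^i_n,\vartheta^i_n \in W^{1,s}_{\Gamma_D}(\Omega)$ for some $s>2$ follows from the Gr\"oger--Meyers $W^{1,s}$-estimate for linear second-order elliptic equations with bounded measurable coefficients under mixed Dirichlet--Neumann boundary conditions on a two-dimensional Lipschitz domain, applied to each of the three equations with all nonlinearities frozen at the already-constructed solution. The main analytical obstacle is the pressure equation: the implicit coupling $r^i_n=r^i_n(p^i_n)$ through the Lipschitz coefficient $\phi$ destroys strict monotonicity of the accumulation term, and the possibly vanishing lower bound $a_1(\cdot)$ precludes direct coercivity. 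The resolution combines the $O(h)$-Lipschitz control from \eqref{lipschitz_phi} and \eqref{assum:bound_f}, the auxiliary regularization of $a$, and the discrete $L^\infty$-bound guaranteed by assumption (v), reducing the full problem to a setting in which standard pseudo-monotone operator theory applies at each time step.
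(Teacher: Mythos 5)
Your proposal is correct in outline but takes a genuinely different route from the paper for the pressure step. You propose to solve for $p^i_n$ directly as a quasilinear equation, regularizing the degenerate coefficient ($a\mapsto\max(a,\varepsilon)$), invoking pseudo-monotone surjectivity, and then using a De Giorgi--Moser $L^\infty$-bound to remove the regularization. The paper instead invokes its a priori lower bound (Theorem~\ref{thm:max_principle_pressure}) to truncate the relative permeability at the level $\ell$ and then applies the Kirchhoff change of variable $u=\beta(p^i_n)$ with $\beta'=\widetilde{k}_r$: this converts the degenerate quasilinear elliptic part into a \emph{linear} part with a fixed, uniformly elliptic coefficient $A(x)=k(x,{r}^{i-1}_n)/\mu(\vartheta^{i-1}_n)$, while all the $p^i_n$-nonlinearity (including the implicit coupling $r^i_n=r^i_n(p^i_n)$ that you correctly identify as the main obstacle) is collapsed into a bounded zero-order term $B(x,u)$. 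The transformed problem is a non-degenerate semilinear equation handled by standard theory \cite[Chapter~2.4]{Roubicek2005}, with $L^\infty$ from the weak maximum principle and $W^{1,s}$ from \cite[Theorem~4]{gallouet} — the same family of results as your Gr\"oger--Meyers citation. Your route avoids the change of variable, at the cost of carrying the pseudo-monotonicity/compactness argument through the full quasilinear structure; the Kirchhoff transform buys a cleaner, already linear-elliptic formulation. Two small corrections to your write-up: (a) the perturbation of the accumulation term generated by $r^i_n(p^i_n)$ is bounded by $C_\phi C_f S_s$ \emph{after} dividing by $h$, so it is $O(1)$, not $O(h)$, as a contribution to the operator $V\to V^*$; it is still a bounded continuous superposition map and therefore a compact perturbation via $W^{1,2}_{\Gamma_D}(\Omega)\hookrightarrow\hookrightarrow L^2(\Omega)$, which suffices; (b) once the uniform bound $p^i_{n,\varepsilon}\geq\ell$ is established, $\max(a,\varepsilon)=a$ for every $\varepsilon<a_1(\ell)$, so the regularized solution already solves the unregularized equation and Minty's trick is superfluous.
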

\begin{rem}
By Theorem~\ref{thm:aprox} and
the embeddings $W_{\Gamma_D}^{1,s}(\Omega) \hookrightarrow L^{\infty}(\Omega)$
(recall that $s>2$ and $\Omega \subset \mathbb{R}^2$)
and
$W_{\Gamma_D}^{1,s}(\Omega) \hookrightarrow W^{1,2}(\Omega)$
we are able to solve \eqref{approximate_problem_01}--\eqref{approximate_problem_04a}
recursively for
$[p^{i}_{n},c^{i}_{n},\vartheta^{i}_{n},r^{i}_{n}]$
by the already known $[p^{i-1}_{n},c^{i-1}_{n},\vartheta^{i-1}_{n},r^{i-1}_{n}]$,
such that we obtain
\begin{align*}
&   {p}^{i}_{n} \in  W_{\Gamma_D}^{1,2}(\Omega) \cap L^{\infty}(\Omega),
\\
&   {c}^{i}_{n} \in  W_{\Gamma_D}^{1,2}(\Omega) \cap L^{\infty}(\Omega),
\\
&   {\vartheta}^{i}_{n} \in  W_{\Gamma_D}^{1,2}(\Omega) \cap L^{\infty}(\Omega),
\\
&   {r}^{i}_{n} \in  W^{1,2}(\Omega) \cap L^{\infty}(\Omega)
\\
\end{align*}
for all $i=1,\dots,n$.
\end{rem}

Before proving Theorem~\ref{thm:aprox}, we present two auxiliary results,
formulated in
Theorem~\ref{thm:max_principle_pressure}
and
Lemma~\ref{lem:iteration}.

\begin{thm}[Weak maximum principle for pressure approximations]
\label{thm:max_principle_pressure}
Let
${p}^{i}_{n} \in  W_{\Gamma_D}^{1,s}(\Omega)$,
${c}^{i}_{n} \in  W_{\Gamma_D}^{1,s}(\Omega)$,
$\vartheta^{i}_{n} \in  W_{\Gamma_D}^{1,s}(\Omega)$ with some $s>2$,
and ${r}^{i}_{n} \in  W^{1,2}(\Omega) \cap L^{\infty}(\Omega)$
solve
\eqref{approximate_problem_01}--\eqref{approximate_problem_04b}
successively for $i=1,\dots,n$.
Then there exists $\ell$ (independent of $n$) such that
\begin{equation}\label{est:p_uniform_bound}
{p}_{n}^{i} \geq \ell \; \textmd{ almost everywhere in } \Omega
\textmd{ and for all } \; i=1,2,\dots,n.
\end{equation}
\end{thm}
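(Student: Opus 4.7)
The plan is to propagate a lower bound on $p_n^i$ by induction on $i$, combining a De Giorgi truncation in \eqref{approximate_problem_01} with a Moser iteration over a decreasing sequence of levels. The base case $i=0$ holds by \eqref{cond:lower_bound_pressure}, so we assume $p_n^{i-1} \geq \ell$ a.e.\ for some $\ell \leq p_1 \leq 0$ to be chosen below in terms of the data only, and aim to show $p_n^i \geq \ell$.

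For each $k \in [\ell, 0]$, the function $(k - p_n^i)^+$ lies in $W_{\Gamma_D}^{1,2}(\Omega)$ (it vanishes on $\Gamma_D$ since $p_n^i = 0$ there and $k \leq 0$), so it is an admissible test function. Testing \eqref{approximate_problem_01} with $\zeta = -(k - p_n^i)^+$, the elliptic part produces $\int_{\{p_n^i < k\}} a_1(p_n^i)\,|\nabla p_n^i|^2\,dx$ by \eqref{con12a}. The discrete time derivative is decomposed as
\begin{equation*}
\phi(x,r_n^i)S(p_n^i) - \phi(x,r_n^{i-1})S(p_n^{i-1}) = \phi(x,r_n^i)[S(p_n^i) - S(p_n^{i-1})] + [\phi(x,r_n^i) - \phi(x,r_n^{i-1})]S(p_n^{i-1}).
\end{equation*}
On $\{p_n^i < k \leq p_n^{i-1}\}$, the principal (first) part is nonpositive by the strict monotonicity of $S$ and so contributes with the correct sign when integrated against the nonnegative $(k - p_n^i)^+$; the perturbation (second) part is controlled by combining \eqref{lipschitz_phi} with the identity $r_n^i - r_n^{i-1} = h\,f(\cdot)$ that follows from \eqref{approximate_problem_04a} together with \eqref{assum:bound_f}, giving $|\phi(x,r_n^i) - \phi(x,r_n^{i-1})|/h \leq C_\phi C_f$, which is an $O(1)$ correction uniform in $n$. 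The source is bounded via $|\alpha_1 f| \leq |\alpha_1|\tilde{f}(p_n^i)$. Combining these three estimates with the Sobolev embedding $W_{\Gamma_D}^{1,2}(\Omega)\hookrightarrow L^q(\Omega)$ in dimension two, and using the degeneracy bound $a_1/S' \geq M$ from \eqref{def:M} to rewrite the weighted Dirichlet integral in terms of $\nabla S(p_n^i)$, yields a recursive inequality relating $\|(k - p_n^i)^+\|_{L^q}$ on two successive truncation levels.

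The quantity controlling the Moser iteration is, up to universal constants, the ratio $\tilde{f}(S^{-1}(\delta))/(M(S^{-1}(\delta))\,\delta)$ evaluated at $\delta = S(k)$, which by Assumption~(v) vanishes as $k \to -\infty$. Choosing the starting level $\ell$ sufficiently negative -- depending only on the data (through $\tilde{f}, M, S, a_1, C_\phi, C_f, \alpha_1, p_1$ and $\Omega$) -- the iteration closes along a decreasing sequence $k_m \searrow \ell$ and forces $|\{p_n^i < \ell\}|=0$; by construction this $\ell$ is independent of both $n$ and $i$, which yields the uniform bound \eqref{est:p_uniform_bound}.

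The main obstacle is the double degeneracy of the elliptic operator: both $a_1(\xi)$ and $S'(\xi)$ may collapse as $\xi \to -\infty$, so the coercivity of the truncation estimate weakens precisely where pressure is lowest. Assumption~(v) is calibrated exactly to dominate the bounded reaction term by the degenerate diffusion in the correct weighted sense, but ensuring that the $O(1)$ perturbation arising from the Lipschitz behaviour of $\phi$ and the semi-implicit time discretization does not spoil the geometric decay of the Moser scheme -- i.e.\ that the closure of the iteration is genuinely uniform in $n$ -- is the central technical point the proof has to address.
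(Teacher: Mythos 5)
Your proposal is in the right conceptual neighborhood (level-set truncation, Assumption~(v) to tame the degeneracy, the identity $r_n^i-r_n^{i-1}=h\,f$ to control the $\phi$-differences, a 2D Sobolev/iteration argument), but the induction-on-$i$ framework combined with the linear test function $\zeta=-(k-p_n^i)^+$ has a genuine gap in the treatment of the discrete time derivative. After testing, the term $h^{-1}\int_\Omega \phi(x,r_n^i)\,[S(p_n^i)-S(p_n^{i-1})]\,(k-p_n^i)^+\,{\rm d}x$ appears, and you claim it has the correct (nonpositive) sign on $\{p_n^i<k\le p_n^{i-1}\}$ and is an $O(1)$ perturbation elsewhere. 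But the inductive hypothesis only gives $p_n^{i-1}\ge\ell$, while your Moser iteration runs over levels $k\in[\ell,0]$, i.e.\ $k\ge\ell$. Thus the ``bad'' set $\{\ell\le p_n^{i-1}<p_n^i<k\}$ can be nonempty; there $S(p_n^i)-S(p_n^{i-1})>0$ and $(k-p_n^i)^+>0$, giving a positive contribution whose size is only controlled by $h^{-1}\cdot O(1)\cdot O(1)$, i.e.\ an $O(1/h)$ term uniform neither in $n$ nor in the final constant. Restricting to levels $k\le\ell$ would fix the sign, but then the iteration would conclude only a lower bound strictly below $\ell$, and the induction hypothesis could not be re-established at the next step. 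So the scheme does not close as stated.

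The paper avoids this obstacle by two devices, neither of which is an induction on $i$. First, it tests at the $S$-level with $\zeta=(S(p_n^i)-S(k))_-$ and exploits the unconditional algebraic inequality $(a-b)(a)_-\ge\tfrac12|(a)_-|^2-\tfrac12|(b)_-|^2$ (the ``simple technical computation'' leading to \eqref{est:901}), which produces a telescoping quadratic quantity $I_k(i)=\int_\Omega\phi(x,r_n^i)\,|(S(p_n^i)-S(k))_-|^2\,{\rm d}x$ regardless of the sign of $S(p_n^{i-1})-S(k)$. Second, instead of summing (or inducting), it selects the index $m$ where $I_k$ attains its maximum over $i=1,\dots,n$; at this index the discrete difference $I_k(m)-I_k(m-1)\ge 0$ automatically, so the time-derivative term drops out with the correct sign and only the elliptic part and the bounded source survive in \eqref{est:601}. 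The comparison $I_k(i)\le I_k(m)$ for all $i$ then transports the estimate back to every time slice via \eqref{est:610}--\eqref{est:611}, which is what makes the resulting bound uniform in $n$. If you wish to retain your strategy, you should replace the linear truncation by the quadratic one at the $S$-level and replace induction by the max-over-$i$ argument (or by a telescoping summation followed by a supremum over the final index), after which your Moser-type iteration in $L^q$ could serve as an alternative to the paper's De Giorgi iteration of level-set measures.
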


To prove Theorem~\ref{thm:max_principle_pressure} we need the following lemma:
\begin{lem}[See e.g. Proposition~4.2 in \cite{Ning1990}]
\label{lem:iteration}
If a nonnegative sequence  $\left\{ Z_{j} \right\}$ satisfies
\begin{equation*}
Z_{j+1} \leq \gamma 4^{j} Z_{j}^{\tau+1} \qquad (\tau>0),
\end{equation*}
then
\begin{equation}\label{limit_00}
\lim_{j \rightarrow +\infty} Z_{j} = 0
\end{equation}
provided that
$$
\gamma \leq Z_{0}^{-\tau} 4^{-1/\tau}.
$$
\end{lem}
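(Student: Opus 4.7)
The plan is to prove by induction on $j$ the explicit geometric decay estimate
\begin{equation*}
Z_{j} \leq Z_0 \cdot 4^{-j/\tau}, \qquad j = 0, 1, 2, \dots,
\end{equation*}
from which \eqref{limit_00} follows immediately, since $4^{-j/\tau}\to 0$ as $j\to\infty$ (using $\tau>0$). The case $Z_0 = 0$ is trivial: the recurrence forces $Z_j \equiv 0$, so I may henceforth assume $Z_0 > 0$, in which case the hypothesis $\gamma \leq Z_0^{-\tau} 4^{-1/\tau}$ is substantive.

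For the base case $j=0$, the inequality is the identity $Z_0 \leq Z_0$. For the inductive step, suppose the bound holds at level $j$. Plugging this into the recurrence and simplifying the exponents of $4$, I would compute
\begin{equation*}
Z_{j+1}
\;\leq\; \gamma \, 4^{j} Z_j^{\tau+1}
\;\leq\; \gamma \, 4^{j} \, Z_0^{\tau+1} \, 4^{-j(\tau+1)/\tau}
\;=\; \gamma \, Z_0^{\tau+1} \, 4^{-j/\tau}.
\end{equation*}
To close the induction I need the right-hand side to be bounded by $Z_0 \cdot 4^{-(j+1)/\tau}$, which after cancellation reduces exactly to the requirement $\gamma \, Z_0^{\tau} \leq 4^{-1/\tau}$. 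This is precisely the hypothesis of the lemma, so the inductive step goes through.

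There is no real obstacle here; the calibration of the decay ratio is the whole content of the argument. The choice $q = 4^{-1/\tau}$ is the unique one that exactly absorbs the $4^j$ prefactor in the recurrence after raising to the power $\tau+1$, and the threshold value $Z_0^{-\tau} 4^{-1/\tau}$ on $\gamma$ is sharp for this geometric-decay ansatz. Letting $j\to\infty$ in the established bound $Z_j \leq Z_0 \cdot 4^{-j/\tau}$ yields \eqref{limit_00} and completes the proof.
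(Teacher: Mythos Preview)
Your proof is correct; the induction with the geometric-decay ansatz $Z_j \le Z_0\,4^{-j/\tau}$ is exactly the standard argument for this De~Giorgi-type iteration lemma, which is what the paper defers to by citing \cite[Lemma~4.1.1]{Wu2006} rather than writing out its own proof.
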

\begin{proof}
The proof follows from the proof of Lemma~4.1.1 in \cite{Wu2006}.
\end{proof}

\begin{proof}[Proof of Theorem~\ref{thm:max_principle_pressure}]
The proof is based on the De~Giorgi iteration technique,
see e.g. \cite[Chapter~4]{Wu2006} or \cite{Ladyzhenskaya1968}.
Let $k \in \mathbb{R}$ and set
\begin{equation*}
(\phi-k)_{-}
\equiv
\left\{
\begin{array}{ll}
\phi-k ,         &  \phi < k ,
\\
0 ,                   & \phi \geq k.
\end{array} \right.
\end{equation*}
For $k < {p}_1$ (here, ${p}_1$ is taken from \eqref{cond:lower_bound_pressure}) we have
$\zeta = (S({p}_{n}^{i})-S(k))_{-} \in W_{\Gamma_D}^{1,2}(\Omega)$ and
thus we may choose $\zeta = (S({p}_{n}^{i})-S(k))_{-}$
as a test function in
\eqref{approximate_problem_01}.
It is a matter of a simple technical computation
to arrive at the estimate
\begin{align}
\label{est:901}
&
\frac{1}{2h}
\int_{\Omega}
\phi(x,r_{n}^{i})
|(S({p}_{n}^{i})-S(k))_{-}|^2
{\,{\rm d}x}
\nonumber
\\
&
-
\frac{1}{2h}
\int_{\Omega}
\phi(x,r_{n}^{i-1})
|(S({p}_{n}^{i-1})-S(k))_{-}|^2
{\,{\rm d}x}
\nonumber
\\
&
+
\int_{\Omega}
a({x},{{p}_{n}^{i}},\vartheta_{n}^{i-1},{r}_{n}^{i-1})
\frac{1}{S'({p}_{n}^{i})}
|\nabla (S({p}_{n}^{i}) - S(k))_{-}|^2
{\,{\rm d}x}
\nonumber
\\
\leq
&
\int_{\Omega}
\alpha_1 f({x},{{p}_{n}^{i}}, {c}_{n}^{i-1}, \vartheta_{n}^{i-1}, {r}_{n}^{i-1})
(S({p}_{n}^{i})-S(k))_{-}
{\,{\rm d}x}
\nonumber
\\
&
-
\frac{1}{2h}
\int_{\Omega}
\left[
\phi(x,r_{n}^{i})-\phi(x,r_{n}^{i-1})
\right]
(S({p}_{n}^{i})+S(k))
(S({p}_{n}^{i})-S(k))_{-}
{\,{\rm d}x}.
\end{align}
Using the Lipschitz continuity of $\phi$ with respect to $r$, see \eqref{lipschitz_phi},
and using \eqref{approximate_problem_04a},
we can write
\begin{align}\label{est:902}
| \phi(x,r_{n}^{i})-\phi(x,r_{n}^{i-1}) |
&
\leq
C_{\phi}
|r_{n}^{i} - r_{n}^{i-1}|
\nonumber
\\
&
\leq
{h}
C_{\phi}
|f({x},{{p}_{n}^{i}}, {c}_{n}^{i-1}, \vartheta_{n}^{i-1}, {r}_{n}^{i-1})|.
\end{align}
Let us denote
\begin{equation}
I_{k}(i) := \int_{\Omega}
\phi(x,r_{n}^{i})
|(S({p}_{n}^{i})-S(k))_{-}|^2
{\,{\rm d}x},
\quad
i=1,\dots,n
\end{equation}
and let $I_{k}$ attains its maximum at $i=m$, i.e.
\begin{equation}\label{est:600a}
I_{k}(m) =  \max_{i=1,\dots,n} I_{k}(i)
\end{equation}
and, in other words,
\begin{equation}
I_{k}(m) \geq  I_{k}(i)  \quad   \textmd{ for all }   i=1,2,\dots,n.
\end{equation}
From this and in view of \eqref{est:901} and \eqref{est:902} we have
\begin{align}
\label{est:601}
&
\quad
\int_{\Omega}
a({x},{{p}_{n}^{m}},\vartheta_{n}^{m-1},{r}_{n}^{m-1})
\frac{1}{S'({p}_{n}^{m})}
|\nabla (S({p}_{n}^{m}) - S(k))_{-}|^2
{\,{\rm d}x}
\nonumber
\\
&
\leq
(C_{\phi}S_s + |\alpha_1|)
\int_{\Omega}
|f({x},{{p}_{n}^{m}}, {c}_{n}^{m-1}, \vartheta_{n}^{m-1}, {r}_{n}^{m-1})
(S({p}_{n}^{m}) - S(k))_{-}|
{\,{\rm d}x}.
\end{align}
Further, using \eqref{con12a} leads to
\begin{align}
\label{est:602}
&
\quad
\int_{\Omega}
\frac{a_1({{p}_{n}^{m}})}{S'({p}_{n}^{m})}
|\nabla (S({p}_{n}^{m}) - S(k))_{-}|^2
{\,{\rm d}x}
\nonumber
\\
&
\leq
(C_{\phi}S_s + |\alpha_1|)
\int_{\Omega}
|f({x},{{p}_{n}^{m}}, {c}_{n}^{m-1}, \vartheta_{n}^{m-1}, {r}_{n}^{m-1})
(S({p}_{n}^{m}) - S(k))_{-}|
{\,{\rm d}x}.
\end{align}
On the other hand, by Assumption (v), namely, the inequality \eqref{def:M},
we see that
\begin{equation}
\label{est:603}
M(k)
\int_{\Omega}
|\nabla (S({p}_{n}^{m}) - S(k))_{-}|^2
{\,{\rm d}x}
\leq
\int_{\Omega}
\frac{a({{p}_{n}^{m}})}{S'({p}_{n}^{m})}
|\nabla (S({p}_{n}^{m}) - S(k))_{-}|^2
{\,{\rm d}x}.
\end{equation}
Using the embedding theorem gives {(recall that $\Omega$ is a two-dimensional domain)}
\begin{multline}
\label{est:604}
M(k)
\left(
\int_{\Omega}
| (S({p}_{n}^{m}) - S(k))_{-} |^{q}
{\,{\rm d}x}
\right)^{2/q}
\\
\leq
C_{E}
M(k)
\int_{\Omega}
|\nabla (S({p}_{n}^{m}) - S(k))_{-}|^2
{\,{\rm d}x},
\end{multline}
where {$2<q<+\infty$} and, here, the embedding constant $C_{E}$ depends
only on $\Omega$.
We may now combine
\eqref{est:602}--\eqref{est:604}
to obtain
\begin{multline}
\label{est:605}
M(k)
\left(
\int_{\Omega}
| (S({p}_{n}^{m}) - S(k))_{-} |^{q}
{\,{\rm d}x}
\right)^{2/q}
\\
\leq
C_{E}
(C_{\phi}S_s + |\alpha_1|)
\int_{\Omega}
|f({x},{{p}_{n}^{m}}, {c}_{n}^{m-1}, \vartheta_{n}^{m-1}, {r}_{n}^{m-1})
(S({p}_{n}^{m}) - S(k))_{-}|
{\,{\rm d}x}.
\end{multline}
Taking into account \eqref{assum:bound_f} we have
\begin{multline}
\label{est:606}
\int_{\Omega}
|f({x},{{p}_{n}^{m}}, {c}_{n}^{m-1}, \vartheta_{n}^{m-1}, {r}_{n}^{m-1})
(S({p}_{n}^{m}) - S(k))_{-}|
{\,{\rm d}x}
\\
\leq
\tilde{f}({k})
\int_{\Omega}
|(S({p}_{n}^{m}) - S(k))_{-}|
{\,{\rm d}x}
\end{multline}
and
applying the H\"{o}lder's inequality to the right hand side in \eqref{est:606}
and combining \eqref{est:605} and \eqref{est:606}
we arrive at the estimate
\begin{multline}
\label{est:607}
\left(
\int_{A_k(m)}
| (S({p}_{n}^{m}) - S(k))_{-} |^{q}
{\,{\rm d}x}
\right)^{1/q}
\\
\leq
\frac{C_{E}
(C_{\phi}S_s + |\alpha_1|)\tilde{f}({k})}{M(k)}
\left(
\int_{A_k(m)}
1
{\,{\rm d}x}
\right)^{1/q'},
\end{multline}
where
\begin{equation*}
A_k(m) =
\left\{
{x} \in \Omega;
\;
{p}_{n}^{m}(x) < k
\right\}.
\end{equation*}
On the other hand,
applying the H\"{o}lder's inequality we have
\begin{align*}
&
\int_{A_k(m)}
| (S({p}_{n}^{m}) - S(k))_{-} |^{2}
{\,{\rm d}x}
\nonumber
\\
\leq
&
\left(
\int_{A_k(m)}
| (S({p}_{n}^{m}) - S(k))_{-} |^{q}
{\,{\rm d}x}
\right)^{2/q}
\left(
\int_{A_k(m)}
1
{\,{\rm d}x}
\right)^{(q-2)/q}
\end{align*}
and using \eqref{est:607} yields
\begin{align}
\label{est:609}
&
\int_{A_k(m)}
| (S({p}_{n}^{m}) - S(k))_{-} |^{2}
{\,{\rm d}x}
\nonumber
\\
\leq
&
\left(
\frac{C_{E}
(C_{\phi}S_s + |\alpha_1|)\tilde{f}({k})}{M(k)}
\right)^{2}
\left(
\int_{A_k(m)}
1
{\,{\rm d}x}
\right)^{(3q-4)/q}.
\end{align}
In view of \eqref{est:600a}
and employing \eqref{bound_phi}
we can write
\begin{multline}
\label{est:610}
\phi_1
\int_{A_k(i)}
| (S({p}_{n}^{i}) - S(k))_{-} |^{2}
{\,{\rm d}x}
\leq
I_{k}(i)
\\
\leq
I_{k}(m)
\leq
\phi_2
\int_{A_k(m)}
| (S({p}_{n}^{m}) - S(k))_{-} |^{2}
{\,{\rm d}x}
\end{multline}
for all $ i=1,2,\dots,n$.
Since $\ell < k$ implies $A_{\ell}(i)  \subset A_k(i)$,
and
$(S({p}_{n}^{i}) - S(k))_{-} \leq (S(\ell) - S(k))<0$ on $A_{\ell}(i)$,
we have
$$
|S(\ell) - S(k))|^2 \leq |(S({p}_{n}^{i}) - S(k))_{-}|^2
\quad  \textmd{ on }
A_{\ell}(i)
$$
and thus
\begin{align}
\label{est:611}
(S(\ell) - S(k))^2 |A_{\ell}(i)|
&
\leq
\int_{A_{\ell}(i)}
| (S({p}_{n}^{i}) - S(k))_{-} |^{2}
{\,{\rm d}x}
\nonumber
\\
&
\leq
\int_{A_k(i)}
| (S({p}_{n}^{i}) - S(k))_{-} |^{2}
{\,{\rm d}x}
\nonumber
\\
&
\leq
\frac{\phi_2}{\phi_1}
\int_{A_k(m)}
| (S({p}_{n}^{m}) - S(k))_{-} |^{2}
{\,{\rm d}x}.
\end{align}
Finally, from this and \eqref{est:609} we deduce
\begin{equation}
\label{est:612}
|A_{\ell}(i)|
\leq
\frac{\phi_2}{\phi_1}
\left(
\frac{C_{E}
(C_{\phi}S_s + |\alpha_1|)\tilde{f}({k})}{M(k)(S(\ell) - S(k))}
\right)^{2}
|A_{k}(m)|^{(3q-4)/q}
\quad
\textmd{ for all } i=1,\dots,n.
\end{equation}
To conclude the proof of Theorem~\ref{thm:max_principle_pressure}, we define
\begin{equation*}
\mu_k = \max_{i=1,\dots,n} |A_{k}(i)|.
\end{equation*}
Now, \eqref{est:612} implies  (recall $\ell < k$)
\begin{equation}
\label{est:613}
|\mu_{\ell}|
\leq
\frac{\phi_2}{\phi_1}
\left(
\frac{C_{E}
(C_{\phi}S_s + |\alpha_1|)\tilde{f}({k})}{M(k)(S(\ell) - S(k))}
\right)^{2}
|\mu_{k}|^{(3q-4)/q}.
\end{equation}
Next, we are going to apply Lemma~\ref{lem:iteration}.
In particular,
we define a decreasing sequence
\begin{equation*}
d_{j} = \frac{\delta}{2} \left( 1 + \frac{1}{2^{j}} \right),
\qquad
j=0,1,2,\dots,
\end{equation*}
where $\delta$ is a small positive real number
and let
\begin{equation*}
k_{j} = S^{-1} (d_{j}).
\end{equation*}
Since $S$ is strictly increasing function,
it is clear that
\begin{displaymath}
k_{j+1} < k_{j},
\qquad
j=0,1,2,\dots.
\end{displaymath}
Then from \eqref{est:613} we have
\begin{align}
\label{est:615}
Z_{j+1}
&
\leq
\frac{\phi_2}{\phi_1}
\left(
\frac{C_{E}
(C_{\phi}S_s + |\alpha_1|)\tilde{f}({S^{-1} (d_{j})})}{M(S^{-1} (d_{j}))(d_{j+1} - d_{j})}
\right)^{2}
Z_{j}^{(3q-4)/q}
\nonumber
\\
&
\leq
\frac{\phi_2}{\phi_1}
\left(
\frac{C_{E}
(C_{\phi}S_s + |\alpha_1|)\tilde{f}({S^{-1} (\delta)}) 4}{M({S^{-1} (\delta)})\delta}
\right)^{2}
4^{j}
Z_{j}^{[1+2(q-2)/q]},
\end{align}
where
\begin{equation}
Z_{j}  =  |\mu_{k_{j}}|.
\end{equation}
Recall that $2 < q < +\infty$. In view of Assumption (v) and taking $\delta >0$ ``small enough''
we apply Lemma~\ref{lem:iteration} to get \eqref{limit_00}.
Explicitly, this means that there exists $\ell$ (independent of $n$) such that
$$
|\mu_{\ell}| =0,
$$
in other words,
\begin{equation*}
{p}_{n}^{i} \geq \ell \; \textmd{ almost everywhere in } \Omega
\textmd{ and for all } \; i=1,\dots,n.
\end{equation*}
The proof of Theorem~\ref{thm:max_principle_pressure} is complete.
\end{proof}

\bigskip

Now we are ready to prove Theorem~\ref{thm:aprox}.

\begin{proof}[Proof of Theorem~\ref{thm:aprox}]
The proof rests on the
$W^{1,s}$-regularity of elliptic problems presented in \cite{gallouet,groger1989}
and the embedding
$W_{\Gamma_D}^{1,s}(\Omega) \subset L^{\infty}(\Omega)$ if $s>2$
(recall that  $\Omega$ is a bounded domain in $\mathbb{R}^2$).

We begin by proving the existence of ${p}_{n}^{i}  \in {W_{\Gamma_D}^{1,2}(\Omega)}$,
being the solution to problem \eqref{approximate_problem_01}.
Due to Theorem~\ref{thm:max_principle_pressure}, we may consider the truncated
function $\widetilde{k}_r$ defined by
\begin{equation*}
\widetilde{k}_r (\xi)
\equiv
\left\{
\begin{array}{ll}
{k}_r (S(\xi)) ,         &  \xi > \ell ,
\\
{k}_r (S(\ell)) ,                   & \xi \leq \ell,
\end{array} \right.
\end{equation*}
where $\ell$ is taken from \eqref{est:p_uniform_bound}.
Recall that $k_{R}$ is positive and strictly increasing on $[0,S_s]$
and $S$ is positive and strictly increasing on $\mathbb{R}$. Hence
$\widetilde{k}_r$ is the increasing function such that
\begin{equation}\label{bound_rel_perm}
0 < K_0 \leq \widetilde{k}_r(\xi) \leq K_1
\qquad
\forall \xi \in \mathbb{R}
\end{equation}
with appropriate chosen constants $K_0$ and $K_1$,
say
$K_0 = {k}_r (S(\ell))$ and $K_1={k}_r (S_s)$.
Hence, problem \eqref{approximate_problem_01} takes the form
\begin{align}
\label{proof:approximate_problem_01}
&
\quad
\int_{\Omega}
\frac{k(x,{r}_{n}^{i-1}(x))}{\mu(\vartheta_{n}^{i-1}(x))}
\widetilde{k}_r({p}_{n}^{i})
\nabla {p}_{n}^{i}
\cdot\nabla\zeta
{\,{\rm d}x}
\nonumber
\\
&
\quad
+
\frac{ 1 }{h}
\int_{\Omega}
\phi({x},{r}^{i}_{n}){S}({p}^{i}_{n})\zeta
{\,{\rm d}x}
-
\int_{\Omega}
\alpha_1 f({x},{{p}_{n}^{i}},{c}_{n}^{i-1},\vartheta_{n}^{i-1},{r}_{n}^{i-1})
\zeta
{\,{\rm d}x}
\nonumber
\\
&
=
\frac{ 1 }{h}
\int_{\Omega}
\phi({x},{r}^{i-1}_{n}){S}({p}^{i-1}_{n})\zeta
{\,{\rm d}x}
\end{align}
for any $\zeta \in {W_{\Gamma_D}^{1,2}(\Omega)}$.
Note that
the unknown $r_{n}^{i}$ in the second line of \eqref{proof:approximate_problem_01}
can be easily eliminated using
the equation \eqref{approximate_problem_04a}, which can be rewritten as
\begin{equation}\label{eq:mamory_mod_01}
r_{n}^{i}
=
r_{n}^{i-1}
+
{h}
f({x},{p}_{n}^{i},{c}_{n}^{i-1},\vartheta_{n}^{i-1},{r}_{n}^{i-1}).
\end{equation}
We now
define the so called Kirchhoff transformation, which employs the primitive function
$\beta: \mathbb{R}\rightarrow\mathbb{R}$, $\zeta=\beta(\xi)$, defined by
\begin{displaymath}%
\beta(\xi) = \int\limits_{0}^{\xi} \widetilde{k}_r(s)
{\rm d}s.
\end{displaymath}
It is worth noting that \eqref{bound_rel_perm} implies $\beta$ to
be continuous and increasing, and one-to-one with $\beta^{-1}$
Lipschitz continuous.
Hence,
with the notation $u(x) = \beta({p}_{n}^{i}(x))$,
 problem \eqref{proof:approximate_problem_01}
can be rewritten in terms of a new variable $u$ as
\begin{equation}
\label{proof_exist:approximate_problem_01}
\int_{\Omega}
A({x})
\nabla {u}
\cdot\nabla\zeta
{\,{\rm d}x}
+
\int_{\Omega}
B({x},{u})
\zeta
{\,{\rm d}x}
=
\int_{\Omega}
g(x)
\zeta
{\,{\rm d}x}
\end{equation}
for any $\zeta \in {W_{\Gamma_D}^{1,2}(\Omega)}$,
where we denote briefly
\begin{align*}
A({x})
&=
\frac{k(x,{r}_{n}^{i-1}(x))}{\mu(\vartheta_{n}^{i-1}(x))},
\nonumber
\\
B({x},{u})
&=
\frac{ {S}( \beta^{-1}(u) ) }{h}
\phi({x},r_{n}^{i-1}(x)
+
{h}
f({x},\beta^{-1}(u),{c}_{n}^{i-1}(x),\vartheta_{n}^{i-1}(x),{r}_{n}^{i-1}(x)))
\nonumber
\\
&
\quad
-
\alpha_1 f({x},\beta^{-1}(u),{c}_{n}^{i-1}(x),\vartheta_{n}^{i-1}(x),{r}_{n}^{i-1}(x)),
\nonumber
\\
g({x})
&=
\frac{ \phi({x},{r}^{i-1}_{n}(x)){S}({p}^{i-1}_{n}) }{h}.
\end{align*}
Note that $g \in L^{\infty}(\Omega)$ and
\begin{align*}
& 0<A_1<A({\cdot})<A_2<+\infty \quad ( A_1, A_2 = {\rm const})
&& \textmd{a.e. in } \Omega,
\nonumber
\\
&
|B({\cdot},{\xi})| \leq C   && \forall \xi \in \mathbb{R} \textmd{ and a.e. in } \Omega.
\nonumber
\end{align*}
The existence of $u \in W_{\Gamma_D}^{1,2}(\Omega)$,
the solution of problem \eqref{proof_exist:approximate_problem_01},
follows from \cite[Chapter~2.4]{Roubicek2005}. With $u \in W_{\Gamma_D}^{1,2}(\Omega)$ in hand,
the weak maximum principle for
the problem
\begin{equation*}
\int_{\Omega}
A({x})
\nabla {u}
\cdot\nabla\zeta
{\,{\rm d}x}
=
\int_{\Omega}
g(x)
\zeta
{\,{\rm d}x}
-
\int_{\Omega}
B({x},{u})
\zeta
{\,{\rm d}x}
\end{equation*}
for any $\zeta \in {W_{\Gamma_D}^{1,2}(\Omega)}$,
gives the regularity $u \in L^{\infty}(\Omega)$,
see e.g. \cite[Chapter 4.1.2]{Wu2006}.

Note that, in view of \eqref{bound_rel_perm},
the Kirchhoff transformation preserves $L^{\infty}$ space
for the problem. We now set ${{p}_{n}^{i}}(x) := \beta^{-1} (u(x))$ a.e. in $\Omega$
to get the representation
\begin{equation*}
\nabla {{p}_{n}^{i}}
=
\frac{1}{\widetilde{k}_r( \beta^{-1}({u}))}\nabla {u},
\quad \textmd{ i.e. }
\quad
\widetilde{k}_r(  {{p}_{n}^{i}}  )\nabla {{p}_{n}^{i}}
=
\nabla {u}
\end{equation*}
and hence
\begin{equation*}
{{p}_{n}^{i}}  \in {W_{\Gamma_D}^{1,2}(\Omega)} \cap L^{\infty}(\Omega)
\quad
{ \rm iff }
\quad
u \in {W_{\Gamma_D}^{1,2}(\Omega)} \cap L^{\infty}(\Omega).
\end{equation*}
We now conclude that ${{p}_{n}^{i}}$ solves \eqref{approximate_problem_01}.

With ${{p}_{n}^{i}}  \in {W_{\Gamma_D}^{1,2}(\Omega)} \cap L^{\infty}(\Omega)$ in hand,
we rewrite the equation \eqref{approximate_problem_01} in the form
(transferring the lower-order terms to the right hand side)

\begin{align*}
&
\int_{\Omega}
a({x},{{p}_{n}^{i}},\vartheta_{n}^{i-1},{r}_{n}^{i-1})
\nabla {p}_{n}^{i}
\cdot\nabla\zeta
{\,{\rm d}x}
\nonumber
\\
=
&
\int_{\Omega}
\alpha_1 f({x},{{p}_{n}^{i}},{c}_{n}^{i-1},\vartheta_{n}^{i-1},{r}_{n}^{i-1})
\zeta
{\,{\rm d}x}
-
\int_{\Omega}
\frac{ \phi({x},{r}^{i}_{n}){S}({p}^{i}_{n}) - \phi({x},{r}^{i-1}_{n}){S}({p}^{i-1}_{n}) }{h} \zeta
{\,{\rm d}x}
\end{align*}
for any $\zeta \in {W_{\Gamma_D}^{1,2}(\Omega)}$.
In view of Assumptions (i), (iii) and (iv),
both integrals on the right hand side
make sense for any $\zeta \in W_{\Gamma_D}^{1,r'}(\Omega)$, $r'=r/(r-1)$ with some $r>2$.
Now we are able to apply \cite[Theorem~4]{gallouet} to obtain
${p}^{i}_{n} \in  W_{\Gamma_D}^{1,s}(\Omega)$ with some $s>2$.

Now with
${c}^{i-1}_{n} \in W^{1,2}(\Omega)$,
$\vartheta^{i-1}_{n} \in  W^{1,2}(\Omega)$,
${r}^{i-1}_{n} \in  W^{1,2}(\Omega)$
and
${p}^{i}_{n} \in  W_{\Gamma_D}^{1,s}(\Omega)$
(with some $s>2$) in hand,
one obtains ${r}^{i}_{n}$
directly from \eqref{eq:mamory_mod_01}.
Since $f$ is supposed to be Lipschitz continuous,
we easily deduce ${r}^{i}_{n} \in W^{1,2}(\Omega)$ (c.f. \cite[Proposition~1.28]{Roubicek2005}).
Moreover, by \eqref{assum:bound_f} we have ${r}^{i}_{n} \in L^{\infty}(\Omega)$.

The existence of ${c}^{i}_{n} \in  W_{\Gamma_D}^{1,2}(\Omega)$
and $\vartheta^{i}_{n} \in  W_{\Gamma_D}^{1,2}(\Omega)$,
being the solutions to problems
\eqref{approximate_problem_02} and \eqref{approximate_problem_03}, respectively,
can be proven in the same way as \cite[Theorem~6.5]{BenesKrupicka2016}.
In particular, with ${p}^{i}_{n} \in  W_{\Gamma_D}^{1,s}(\Omega)$, $s>2$,
and $r_{n}^{i} \in L^{\infty}(\Omega)$, given by \eqref{approximate_problem_04a},
in hand,
\eqref{approximate_problem_02} and \eqref{approximate_problem_03}
represent semilinear equations which can be solved by the approach in \cite[Chapter~2.4]{Roubicek2005}.
Analysis similar to the above yields
${c}^{i}_{n},\vartheta^{i}_{n} \in  W_{\Gamma_D}^{1,s}(\Omega)$ with some $s>2$.
By embedding theorem we have ${c}^{i}_{n},\vartheta^{i}_{n} \in L^{\infty}(\Omega)$.
The proof of Theorem~\ref{thm:aprox} is complete.
\end{proof}

\subsection{A-priori estimates for discrete approximations}
\label{sec:estimates}
In this part of the paper, which is rather technical,
we prove some uniform estimates (with respect to $n$) for
the discrete approximations of the solution.
In the following estimates, many different constants
will appear. For simplicity of notation, as above,
$C$, $C_1$, $C_2$, $\dots$, represent
generic constants which may change their numerical values
from one formula to another
but do not depend on $n$ and
the functions under consideration.

\subsubsection{Uniform bounds in  $L^{\infty}$}

We first prove the apriori $L^{\infty}$-estimate for ${c}^{i}_{n}$, $i=1,\dots,n$,
being the solution to \eqref{approximate_problem_02}.
By Theorem~\ref{thm:aprox} we have
${c}^{i}_{n} \in W_{\Gamma_D}^{1,2}(\Omega) \cap L^{\infty}(\Omega)$.
Hence, $({c}^{i}_{n})^{\ell} \in W_{\Gamma_D}^{1,2}(\Omega) \cap L^{\infty}(\Omega)$
for all $\ell=1,2,\dots$.
The following procedure is similar to that used e.g. in \cite{Filo1987,Pluschke1988} for scalar problems.
Let $\ell$ be an odd integer.
Using $\zeta = [\ell/(\ell+1)] ({c}^{i}_{n})^{\ell+1}$
as a test function in \eqref{approximate_problem_01}
and
$\eta = ({c}^{i}_{n})^{\ell}$ in \eqref{approximate_problem_02}
and combining both equations we obtain
\begin{align}\label{eq:501}
&
\quad
\frac{1}{h}
\frac{1}{\ell+1}
\int_{\Omega}
 \phi({x},{r}^{i}_{n}){S}({p}^{i}_{n})    [{c}^{i}_{n} ]^{\ell+1}
{\,{\rm d}x}
\nonumber
\\
&
\quad
-
\frac{1}{h}
\frac{1}{\ell+1}
\int_{\Omega}
   \phi({x},{r}^{i-1}_{n}){S}({p}^{i-1}_{n})   [{c}^{i-1}_{n} ]^{\ell+1}
{\,{\rm d}x}
\nonumber
\\
&
\quad
+
\frac{1}{h}
\frac{1}{\ell+1}
\int_{\Omega}
 \phi({x},{r}^{i-1}_{n}){S}({p}^{i-1}_{n})   [{c}^{i-1}_{n}]^{\ell+1}
{\,{\rm d}x}
\nonumber
\\
&
\quad
+
\frac{1}{h}
\frac{\ell}{\ell+1}
\int_{\Omega}
 \phi({x},{r}^{i-1}_{n}){S}({p}^{i-1}_{n})   [{c}^{i}_{n}]^{\ell+1}
{\,{\rm d}x}
\nonumber
\\
&
\quad
-
\frac{1}{h}
\int_{\Omega}
  \phi({x},{r}^{i-1}_{n}){S}({p}^{i-1}_{n})   {c}^{i-1}_{n}   [{c}^{i}_{n}]^{\ell}
{\,{\rm d}x}
\nonumber
\\
&
\quad
+
\int_{\Omega}
\ell [{c}^{i}_{n}]^{\ell-1}
 \phi({x},{r}^{i-1}_{n}){S}({p}^{i-1}_{n})  D_w({p}^{i-1}_{n})
\nabla {c}^{i}_{n} \cdot \nabla {c}^{i}_{n}
{\,{\rm d}x}
\nonumber
\\
&
=
0.
\end{align}
Applying the Young's inequality,
for the term in the fifth line in \eqref{eq:501} we can write
\begin{align}\label{eq:502}
&
\quad
\frac{1}{h}
\int_{\Omega}
\phi({x},{r}^{i-1}_{n}){S}({p}^{i-1}_{n})   {c}^{i-1}_{n}[{c}^{i}_{n}]^{\ell}
{\,{\rm d}x}
\nonumber
\\
&
\leq
\frac{1}{h}
\frac{1}{\ell+1}
\int_{\Omega}
\phi({x},{r}^{i-1}_{n}){S}({p}^{i-1}_{n})        [{c}^{i-1}_{n}]^{\ell+1}
{\,{\rm d}x}
\nonumber
\\
&
\quad
+
\frac{1}{h}
\frac{\ell}{\ell+1}
\int_{\Omega}
\phi({x},{r}^{i-1}_{n}){S}({p}^{i-1}_{n})        [{c}^{i}_{n}]^{\ell+1}
{\,{\rm d}x}.
\end{align}
Taking \eqref{eq:501} and \eqref{eq:502} together we deduce
\begin{align}\label{eq:503}
&
\quad
\frac{1}{h}
\frac{1}{\ell+1}
\int_{\Omega}
 \phi({x},{r}^{i}_{n}){S}({p}^{i}_{n})    [{c}^{i}_{n} ]^{\ell+1}
{\,{\rm d}x}
\nonumber
\\
&
\quad
-
\frac{1}{h}
\frac{1}{\ell+1}
\int_{\Omega}
   \phi({x},{r}^{i-1}_{n}){S}({p}^{i-1}_{n})   [{c}^{i-1}_{n} ]^{\ell+1}
{\,{\rm d}x}
\nonumber
\\
&
\quad
+
\int_{\Omega}
\ell [{c}^{i}_{n}]^{\ell-1}
 \phi({x},{r}^{i-1}_{n}){S}({p}^{i-1}_{n})  D_w({p}^{i-1}_{n})
|\nabla {c}^{i}_{n}|^2
{\,{\rm d}x}
\nonumber
\\
&
\leq
0.
\end{align}
Now, we sum \eqref{eq:503} for $i=1,\dots,j$ to get
\begin{align}\label{app:eq_bound_w_00}
&
\quad
\int_{\Omega}
 \phi({x},{r}^{j}_{n}){S}({p}^{j}_{n})[{c}^{j}_{n} ]^{\ell+1}
{\,{\rm d}x}
\nonumber
\\
&
\quad
+
h
\sum_{i=1}^{j}
\int_{\Omega}
\ell (\ell+1) [{c}^{i}_{n}]^{\ell-1}
 \phi({x},{r}^{i-1}_{n}){S}({p}^{i-1}_{n})  D_w({p}^{i-1}_{n})
|\nabla {c}^{i}_{n}|^2
{\,{\rm d}x}
\nonumber
\\
&
\leq
\int_{\Omega}
   \phi({x},{r}^{0}_{n}){S}({p}^{0}_{n})   [{c}^{0}_{n} ]^{\ell+1}
{\,{\rm d}x}.
\end{align}
Note that the second integral in
\eqref{app:eq_bound_w_00} is nonnegative ($\ell$ is supposed to be the odd integer).
Moreover, in view of \eqref{app:eq_bound_w_00}, \eqref{bound_phi}
and
\eqref{est:p_uniform_bound} we have
\begin{equation}\label{est:unform_bound_w_01}
\|{c}^{j}_{n}\|_{L^{\ell+1}(\Omega)} \leq C \|{c}_{0}\|_{L^{\ell+1}(\Omega)},
\end{equation}
where the constant $C$ is independent of $\ell$ and $n$.
Now, letting $\ell \rightarrow +\infty$ in \eqref{est:unform_bound_w_01}, we obtain
\begin{equation}\label{est:unform_bound_w_05}
\|{c}^{j}_{n}\|_{L^{\infty}(\Omega)} \leq C, \qquad j=1,\dots,n.
\end{equation}
The same $L^{\infty}$-estimate can be drawn for
temperature approximations, i.e.
\begin{equation}\label{est:unform_bound_theta_05}
\|\vartheta_{n}^{j}\|_{L^{\infty}(\Omega)} \leq C, \qquad j=1,\dots,n.
\end{equation}
Because many steps of the proof of \eqref{est:unform_bound_theta_05} are similar to those
from the preceding estimate \eqref{est:unform_bound_w_05},
we shall proceed more rapidly here, without explaining particular steps
once more.

At the same time, from \eqref{approximate_problem_04a} and using \eqref{assum:bound_f}
we have
\begin{multline}\label{est:unform_bound_r_01}
\| r_{n}^{i} \|_{L^{\infty}(\Omega)}
=
\| {h} \sum_{j=1}^{i}
f({x},{p}_{n}^{j},{c}_{n}^{j-1},\vartheta_{n}^{j-1},{r}_{n}^{j-1}) \|_{L^{\infty}(\Omega)}
\\
\leq
ih C_{f}
\leq
T C_{f},
\quad
i=1,\dots,n.
\end{multline}

{\subsubsection{Energy estimates for discrete approximations of primary unknowns}}

We start with the uniform estimate for pressure approximations ${p}_{n}^{i}$.
We test \eqref{approximate_problem_01} with $\zeta = {p}_{n}^{i}$ to get
\begin{align}
\label{est:press_01}
&
\quad
\int_{\Omega}
\left[\phi(x,r_{n}^{i})-\phi(x,r_{n}^{i-1}) \right]{S}({p}^{i}_{n}){p}^{i}_{n}
{\,{\rm d}x}
\nonumber
\\
&
\quad
+
\int_{\Omega}
\phi(x,r_{n}^{i-1})
\left[ {S}({p}^{i}_{n}) - {S}({p}^{i-1}_{n})  \right]
{p}_{n}^{i}
{\,{\rm d}x}
\nonumber
\\
&
\quad
+
{h}
\int_{\Omega}
a({x},{{p}_{n}^{i}},\vartheta_{n}^{i-1},{r}_{n}^{i-1})
\nabla {p}_{n}^{i}
\cdot\nabla {p}_{n}^{i}
{\,{\rm d}x}
\nonumber
\\
&
=
{h}
\int_{\Omega}
\alpha_1 f({x},{{p}_{n}^{i}},{c}_{n}^{i-1},\vartheta_{n}^{i-1},{r}_{n}^{i-1})
{p}_{n}^{i}
{\,{\rm d}x}.
\end{align}
Define the function $\Theta_{S}: \mathbb{R} \rightarrow \mathbb{R}$ given by the equation
\begin{equation}\label{est:press_02}
\Theta_{S}(\xi) = \int_{0}^{\xi}   S'(z)z  {{\rm d}z},
\qquad
\xi \in \mathbb{R}.
\end{equation}
It is easy to check that
\begin{equation}\label{est:press_03}
\Theta_{S}(\xi_1) - \Theta_{S}(\xi_2) \leq [S(\xi_1) - S(\xi_2)]\xi_1
\qquad
\forall \xi_1,\xi_2 \in \mathbb{R}.
\end{equation}
Using the inequality \eqref{est:press_03} in the equation
\eqref{est:press_01} we arrive at
\begin{align*}
&
\quad
\int_{\Omega}
\left(\phi(x,r_{n}^{i})-\phi(x,r_{n}^{i-1}) \right)
\left[
{S}({p}^{i}_{n}){p}^{i}_{n} - \Theta_{S}({p}^{i}_{n})
\right]
{{\rm d}x}
\nonumber
\\
&
\quad
+
\int_{\Omega}
\left[
\phi(x,r_{n}^{i})\Theta_{S}({p}^{i}_{n}) - \phi(x,r_{n}^{i-1})\Theta_{S}({p}^{i-1}_{n})
\right]
{{\rm d}x}
\nonumber
\\
&
\quad
+
{h}
\int_{\Omega}
a({x},{{p}_{n}^{i}},\vartheta_{n}^{i-1},{r}_{n}^{i-1})
\nabla {p}_{n}^{i}
\cdot
\nabla {p}_{n}^{i}
{{\rm d}x}
\nonumber
\\
&
\leq
{h}
\int_{\Omega}
\alpha_1 f({x},{{p}_{n}^{i}},{c}_{n}^{i-1},\vartheta_{n}^{i-1},{r}_{n}^{i-1})
{p}_{n}^{i}
{{\rm d}x}.
\end{align*}
From this we have
\begin{align}
\label{est:press_05}
&
\quad
\int_{\Omega}
\left[
\phi(x,r_{n}^{i})\Theta_{S}({p}^{i}_{n}) - \phi(x,r_{n}^{i-1})\Theta_{S}({p}^{i-1}_{n})
\right]
{{\rm d}x}
\nonumber
\\
&
\quad
+
{h}
\int_{\Omega}
a({x},{{p}_{n}^{i}},\vartheta_{n}^{i-1},{r}_{n}^{i-1})
|\nabla {p}_{n}^{i}|^2
{{\rm d}x}
\nonumber
\\
&
\leq
{h}
\int_{\Omega}
\alpha_1 f({x},{{p}_{n}^{i}},{c}_{n}^{i-1},\vartheta_{n}^{i-1},{r}_{n}^{i-1})
{p}_{n}^{i}
{{\rm d}x}
\nonumber
\\
&
\quad
-
\int_{\Omega}
\left(\phi(x,r_{n}^{i})-\phi(x,r_{n}^{i-1}) \right)
\left[
{S}({p}^{i}_{n}){p}^{i}_{n} - \Theta_{S}({p}^{i}_{n})
\right]
{{\rm d}x}.
\end{align}
Taking into account \eqref{lipschitz_phi} and \eqref{assum:bound_f}
and
estimating the right-hand side in \eqref{est:press_05} we deduce
\begin{align}
\label{est:press_06}
&
{h}
\int_{\Omega}
\alpha_1 f({x},{{p}_{n}^{i}},{c}_{n}^{i-1},\vartheta_{n}^{i-1},{r}_{n}^{i-1})
{p}_{n}^{i}
{{\rm d}x}
\nonumber
\\
&
-
\int_{\Omega}
\left(\phi(x,r_{n}^{i})-\phi(x,r_{n}^{i-1}) \right)
\left[
{S}({p}^{i}_{n}){p}^{i}_{n} - \Theta_{S}({p}^{i}_{n})
\right]
{{\rm d}x}
\nonumber
\\
\leq
&
\;
{h}
\int_{\Omega}
|\alpha_1
f({x},{{p}_{n}^{i}},{c}_{n}^{i-1},\vartheta_{n}^{i-1},{r}_{n}^{i-1})
{p}_{n}^{i}|
{{\rm d}x}
\nonumber
\\
&
+
\int_{\Omega}
C_{\phi} | r_{n}^{i} - r_{n}^{i-1} |
|{S}({p}^{i}_{n}){p}^{i}_{n} - \Theta_{S}({p}^{i}_{n})|
{{\rm d}x}
\nonumber
\\
\leq
&
\;
|\alpha_1|  {h}
\int_{\Omega}
|f({x},{{p}_{n}^{i}},{c}_{n}^{i-1},\vartheta_{n}^{i-1},{r}_{n}^{i-1})|
|{p}_{n}^{i}|
{{\rm d}x}
\nonumber
\\
&
+
{h}
\int_{\Omega}
C_{\phi}
|f({x},{{p}_{n}^{i}},{c}_{n}^{i-1},\vartheta_{n}^{i-1},{r}_{n}^{i-1})|
|{S}({p}^{i}_{n}){p}^{i}_{n} - \Theta_{S}({p}^{i}_{n})|
{{\rm d}x}
\nonumber
\\
\leq
&
\;
C_1 {h}
\int_{\Omega}
|{p}^{i}_{n}|
{{\rm d}x}
+
C_2 {h}
\int_{\Omega}
\Theta_{S}({p}^{i}_{n})
{{\rm d}x}.
\end{align}
Combining \eqref{est:press_05} and \eqref{est:press_06},
using \eqref{est:p_uniform_bound}
and applying the Young's inequality
to the first term on the right-hand side in
\eqref{est:press_06}
we get
\begin{align}
\label{est:press_07}
&
\int_{\Omega}
\left[
\phi(x,r_{n}^{i})\Theta_{S}({p}^{i}_{n}) - \phi(x,r_{n}^{i-1})\Theta_{S}({p}^{i-1}_{n})
\right]
{{\rm d}x}
+
C_1
{h}
\int_{\Omega}
|\nabla {p}_{n}^{i}|^2
{{\rm d}x}
\nonumber
\\
\leq
&
\;
C_2 {h}
+
C_3 {h}
\int_{\Omega}
\Theta_{S}({p}^{i}_{n})
{{\rm d}x}.
\end{align}
Sum \eqref{est:press_07} for $i=1,2,\dots,k$.
We have
\begin{align}
\label{est:press_08}
&
\int_{\Omega}
\phi(x,r_{n}^{k})\Theta_{S}({p}^{k}_{n})
{{\rm d}x}
+
C_1
{h}
\sum_{i=1}^{k}
\int_{\Omega}
|\nabla {p}_{n}^{i}|^2
{{\rm d}x}
\nonumber
\\
\leq
&
\;
\int_{\Omega}
\phi(x,r_{n}^{0})\Theta_{S}({p}^{0}_{n})
{{\rm d}x}
+
C_2 k {h}
+
C_3 {h}
\sum_{i=1}^{k}
\int_{\Omega}
\Theta_{S}({p}^{i}_{n})
{{\rm d}x},
\qquad k=1,2,\dots,n.
\end{align}
We now apply the discrete version of the Gronwall's inequality
(see e.g. \cite[Chapter~1, inequality (1.67)]{Roubicek2005}) to get
\begin{equation}\label{est:energy_pressure}
\int_{\Omega}
\Theta_{S}({p}^{k}_{n})
{{\rm d}x}
+
{h}
\sum_{i=1}^{k}
\int_{\Omega}
|\nabla {p}_{n}^{i}|^2
{{\rm d}x}
\leq
C,
\qquad
k=1,2,\dots,n.
\end{equation}
%
%
%
In what follows,
we proceed by proving a similar uniform estimate for approximations of $c$.
Using $\eta = 2 {c}^{i}_{n} $
as a test function in \eqref{approximate_problem_02} we have
\begin{align}\label{est:_w_01}
&
\quad
\int_{\Omega}
\phi({x},{r}^{i}_{n}){S}({p}^{i}_{n})({c}^{i}_{n})^2
-
\phi({x},{r}^{i-1}_{n}){S}({p}^{i-1}_{n})({c}^{i-1}_{n})^2
{\,{\rm d}x}
\nonumber
\\
&
\quad
+
\int_{\Omega}
\left[
\phi({x},{r}^{i}_{n}){S}({p}^{i}_{n})
-
\phi({x},{r}^{i-1}_{n}){S}({p}^{i-1}_{n})
\right]({c}^{i}_{n})^2
{\,{\rm d}x}
\nonumber
\\
&
\quad
+
\int_{\Omega}
\phi({x},{r}^{i-1}_{n}){S}({p}^{i-1}_{n})
\left(
{c}^{i}_{n}-{c}^{i-1}_{n}
\right)^2
{\,{\rm d}x}
\nonumber
\\
&
\quad
+
2 h
\int_{\Omega}
\phi({x},{r}^{i}_{n}){S}({p}^{i}_{n})  D_w(x,{p}_{n}^{i})
\nabla {c}^{i}_{n} \cdot \nabla {c}^{i}_{n}
{\,{\rm d}x}
\nonumber
\\
&
\quad
+
h
\int_{\Omega}
a({x},{{p}_{n}^{i}},\vartheta_{n}^{i-1},{r}_{n}^{i-1})
\nabla {p}_{n}^{i}
\cdot
2{c}^{i}_{n} \nabla {c}^{i}_{n}
{\,{\rm d}x}
\nonumber
\\
&
=0.
\end{align}
One is allowed to use $\zeta =  ({c}^{i}_{n})^2$
as a test function in \eqref{approximate_problem_01} to obtain
\begin{align}\label{est:_w_02}
&
\quad
\int_{\Omega}
\left[
\phi({x},{r}^{i}_{n}){S}({p}^{i}_{n})
-
\phi({x},{r}^{i-1}_{n}){S}({p}^{i-1}_{n})
\right]({c}^{i}_{n})^2
{\,{\rm d}x}
\nonumber
\\
&
\quad
+
h
\int_{\Omega}
a({x},{{p}_{n}^{i}},\vartheta_{n}^{i-1},{r}_{n}^{i-1})
\nabla {p}_{n}^{i}
\cdot\nabla ({c}^{i}_{n})^2
{\,{\rm d}x}
\nonumber
\\
&
=
h
\int_{\Omega}
\alpha_1 f({x},{{p}_{n}^{i}},{c}_{n}^{i-1},\vartheta_{n}^{i-1},{r}_{n}^{i-1})
 ({c}^{i}_{n})^2
{\,{\rm d}x}.
\end{align}
Subtracting \eqref{est:_w_02} from \eqref{est:_w_01} gives
\begin{align}\label{est:w_100}
&
\quad
\int_{\Omega}
\phi({x},{r}^{i}_{n}){S}({p}^{i}_{n})({c}^{i}_{n})^2
-
\phi({x},{r}^{i-1}_{n}){S}({p}^{i-1}_{n})({c}^{i-1}_{n})^2
{\,{\rm d}x}
\nonumber
\\
&
\quad
+
\int_{\Omega}
\phi({x},{r}^{i-1}_{n}){S}({p}^{i-1}_{n})
\left(
{c}^{i}_{n}-{c}^{i-1}_{n}
\right)^2
{\,{\rm d}x}
\nonumber
\\
&
\quad
+
2 h
\int_{\Omega}
\phi({x},{r}^{i}_{n}){S}({p}^{i}_{n})  D_w(x,{p}_{n}^{i})
\nabla {c}^{i}_{n} \cdot \nabla {c}^{i}_{n}
{\,{\rm d}x}
\nonumber
\\
&
\quad
+
h
\int_{\Omega}
\alpha_1 f({x},{{p}_{n}^{i}},{c}_{n}^{i-1},\vartheta_{n}^{i-1},{r}_{n}^{i-1})
 ({c}^{i}_{n})^2
{\,{\rm d}x}
\nonumber
\\
&
=0.
\end{align}
Upon addition \eqref{est:w_100} for $i=1,2,\dots,j$ and taking into account
\eqref{assum:bound_f},
we can write
\begin{multline}\label{est:w_151}
\int_{\Omega}
\phi({x},{r}^{j}_{n}){S}({p}^{j}_{n})({c}^{j}_{n})^2
{\,{\rm d}x}
\\
\leq
\phi({x},{r}^{0}_{n}){S}({p}^{0}_{n})({c}^{0}_{n})^2
+
h |\alpha_1| C_f
\sum_{i=1}^{j}
\int_{\Omega}
 ({c}^{i}_{n})^2
{\,{\rm d}x},
\quad
j=1,\dots,n.
\end{multline}
Noting that, in view of \eqref{est:p_uniform_bound}, \eqref{con11a},
\eqref{con12b} and \eqref{bound_phi}, there exists a positive constant $C$
(independent of $n$) such that
\begin{align}\label{bound_below_01}
\phi({x},{r}^{i}_{n}), \;
{S}({p}^{i}_{n}), \;
D_w(x,{p}_{n}^{i}) > C \qquad \textmd{ in } \Omega, \quad i=1,\dots,n,
\end{align}
the inequality \eqref{est:w_151} can be simplified to
\begin{equation}\label{est:w_152}
\int_{\Omega}
({c}^{j}_{n})^2
{\,{\rm d}x}
\leq
C_1
+
C_2 h
\sum_{i=1}^{j}
\int_{\Omega}
 ({c}^{i}_{n})^2
{\,{\rm d}x},
\quad
j=1,\dots,n.
\end{equation}
Now, similarly as in \eqref{est:press_08},
we can use the Gronwall's inequality. By doing that, in view of \eqref{est:w_100},
\eqref{bound_below_01} and \eqref{est:w_152},
we obtain the estimate
\begin{equation}\label{est:energy_concentration}
\max_{i=1,\dots,n}
\int_{\Omega}
|{c}^{i}_{n}|^2
{\,{\rm d}x}
+
h
\sum_{i=1}^{n}
\int_{\Omega}
|\nabla {c}^{i}_{n}|^2
{\,{\rm d}x}
 \leq
C.
\end{equation}
%
%
%
The same uniform estimate can be drawn for the temperature approximations $\vartheta^{i}_{n}$.
We use $\psi  = 2\vartheta^{i}_{n}$
as a test function in \eqref{approximate_problem_03} to obtain
\begin{align*}
&
\quad
\int_{\Omega}
\frac{
\phi({x},{r}^{i}_{n}){S}({p}^{i}_{n})   \vartheta^{i}_{n}
-
\phi({x},{r}^{i-1}_{n}){S}({p}^{i-1}_{n})   \vartheta_{n}^{i-1} }{h}
2\vartheta^{i}_{n}
{\,{\rm d}x}
\nonumber
\\
&
\quad
+
\int_{\Omega}
\frac{ \varrho(x,r_{n}^{i})\vartheta_{n}^{i} - \varrho(x,r_{n}^{i-1})\vartheta_{n}^{i-1} }{h}  2\vartheta^{i}_{n}
{\,{\rm d}x}
\nonumber
\\
&
\quad
+
2\int_{\Omega}
\lambda({x},{{p}_{n}^{i-1}},\vartheta_{n}^{i-1},{r}_{n}^{i-1})
| \nabla \vartheta_{n}^{i} |^2
{\,{\rm d}x}
\nonumber
\\
&
\quad
+
\int_{\Omega}
a({x},{{p}_{n}^{i}},\vartheta_{n}^{i-1},{r}_{n}^{i-1})
\nabla {p}_{n}^{i}
\cdot 2\vartheta_{n}^{i} \nabla\vartheta_{n}^{i}
{\,{\rm d}x}
\nonumber
\\
&
=
2
\int_{\Omega}
\alpha_2
f({x},{{p}_{n}^{i}},{c}_{n}^{i-1},\vartheta_{n}^{i-1},{r}_{n}^{i-1})
\vartheta_{n}^{i}
{\,{\rm d}x}.
\end{align*}
The above equation may be written as
\begin{align}
\label{est:301}
&
\quad
\int_{\Omega}
\frac{
\phi({x},{r}^{i}_{n}){S}({p}^{i}_{n})   [\vartheta^{i}_{n}]^{2}
-
\phi({x},{r}^{i-1}_{n}){S}({p}^{i-1}_{n})   [\vartheta_{n}^{i-1}]^{2}
}{h}
{\,{\rm d}x}
\nonumber
\\
&
\quad
+
\int_{\Omega}
\frac{
\phi({x},{r}^{i}_{n}){S}({p}^{i}_{n})
-
\phi({x},{r}^{i-1}_{n}){S}({p}^{i-1}_{n})
}{h}
[\vartheta^{i}_{n}]^{2}
{\,{\rm d}x}
\nonumber
\\
&
\quad
+
\int_{\Omega}
\frac{ \varrho(x,r_{n}^{i})[\vartheta_{n}^{i}]^2 - \varrho(x,r_{n}^{i-1})[\vartheta_{n}^{i-1}]^2 }{h}
{\,{\rm d}x}
+
\int_{\Omega}
\frac{ \varrho(x,r_{n}^{i})- \varrho(x,r_{n}^{i-1})}{h}
[\vartheta_{n}^{i}]^2 {\,{\rm d}x}
\nonumber
\\
&
\quad
+
\int_{\Omega}
\frac{ \phi({x},{r}^{i-1}_{n}){S}({p}^{i-1}_{n}) [\vartheta^{i}_{n} - \vartheta^{i-1}_{n}]^{2}  }{h}
{\,{\rm d}x}
+
\int_{\Omega}
\frac{ \varrho(x,r_{n}^{i-1})[\vartheta_{n}^{i} - \vartheta^{i-1}_{n}]^{2}  }{h}
{\,{\rm d}x}
\nonumber
\\
&
\quad
+
2\int_{\Omega}
\lambda({x},{{p}_{n}^{i-1}},\vartheta_{n}^{i-1},{r}_{n}^{i-1})
| \nabla \vartheta_{n}^{i} |^2
{\,{\rm d}x}
\nonumber
\\
&
\quad
+
\int_{\Omega}
a({x},{{p}_{n}^{i}},\vartheta_{n}^{i-1},{r}_{n}^{i-1})
\nabla {p}_{n}^{i}
\cdot \nabla [\vartheta_{n}^{i}]^2
{\,{\rm d}x}
\nonumber
\\
&
=
2
\int_{\Omega}
\alpha_2
f({x},{{p}_{n}^{i}},{c}_{n}^{i-1},\vartheta_{n}^{i-1},{r}_{n}^{i-1})
\vartheta_{n}^{i}
{\,{\rm d}x}.
\end{align}
Putting $\zeta = [\vartheta_{n}^{i}]^2$ into \eqref{approximate_problem_01},
we get
\begin{align}
\label{est:302}
&
\quad
\int_{\Omega}
\frac{ \phi({x},{r}^{i}_{n}){S}({p}^{i}_{n})
-
\phi({x},{r}^{i-1}_{n}){S}({p}^{i-1}_{n})
}{h} [\vartheta_{n}^{i}]^2
{\,{\rm d}x}
\nonumber
\\
&
\quad
+
\int_{\Omega}
a({x},{{p}_{n}^{i}},\vartheta_{n}^{i-1},{r}_{n}^{i-1})
\nabla {p}_{n}^{i}
\cdot\nabla [\vartheta_{n}^{i}]^2
{\,{\rm d}x}
\nonumber
\\
&
=
\int_{\Omega}
\alpha_1 f({x},{{p}_{n}^{i}},{c}_{n}^{i-1},\vartheta_{n}^{i-1},{r}_{n}^{i-1})
[\vartheta_{n}^{i}]^2
{\,{\rm d}x}.
\end{align}
Substituting \eqref{est:302} into \eqref{est:301},
multiplying by ${h}$
and taking into account \eqref{approximate_problem_04a} we deduce
\begin{align}
\label{eq:201}
&
\quad
\int_{\Omega}
\left(
\phi({x},{r}^{i}_{n}){S}({p}^{i}_{n})   [\vartheta^{i}_{n}]^{2}
-
\phi({x},{r}^{i-1}_{n}){S}({p}^{i-1}_{n})   [\vartheta_{n}^{i-1}]^{2}
\right)
{\,{\rm d}x}
\nonumber
\\
&
\quad
+
\int_{\Omega}
\left(
\varrho(x,r_{n}^{i})[\vartheta_{n}^{i}]^2 - \varrho(x,r_{n}^{i-1})[\vartheta_{n}^{i-1}]^2
\right)
{\,{\rm d}x}
\nonumber
\\
&
\quad
+
2\int_{\Omega}
\lambda({x},{{p}_{n}^{i-1}},\vartheta_{n}^{i-1},{r}_{n}^{i-1})
| \nabla \vartheta_{n}^{i} |^2
{\,{\rm d}x}
\nonumber
\\
&
\leq
{h}
\int_{\Omega}
\left(
2 |\alpha_2| |\vartheta_{n}^{i}|
+
|\alpha_1| [\vartheta_{n}^{i}]^2
+
[\vartheta_{n}^{i}]^2
\right)
 |f({x},{{p}_{n}^{i}},{c}_{n}^{i-1},\vartheta_{n}^{i-1},{r}_{n}^{i-1})|
{\,{\rm d}x}.
\end{align}
We now apply the estimate
\begin{multline*}
{h}
\int_{\Omega}
\left(
2 |\alpha_2| |\vartheta_{n}^{i}|
+
|\alpha_1| [\vartheta_{n}^{i}]^2
+
[\vartheta_{n}^{i}]^2
\right)
 |f({x},{{p}_{n}^{i}},{c}_{n}^{i-1},\vartheta_{n}^{i-1},{r}_{n}^{i-1})|
{\,{\rm d}x}
\\
\leq
{h}
\int_{\Omega}
\left(
C_1
+
C_2 [\vartheta_{n}^{i}]^2
\right)
C_{f}
{\,{\rm d}x},
\end{multline*}
which holds for ``sufficiently large'' $C_1$,
and sum \eqref{eq:201} for $i = 1,. . . ,j$ to obtain
\begin{align*}
&
\quad
\int_{\Omega}
\left(
\phi({x},{r}^{j}_{n}){S}({p}^{j}_{n}) + \varrho(x,r_{n}^{j})
\right)
[\vartheta^{j}_{n}]^{2}
{\,{\rm d}x}
\nonumber
\\
&
\quad
+
2 {h}
\sum_{i=1}^{j}
\int_{\Omega}
\lambda({x},{{p}_{n}^{i-1}},\vartheta_{n}^{i-1},{r}_{n}^{i-1})
| \nabla \vartheta_{n}^{i} |^2
{\,{\rm d}x}
\nonumber
\\
&
\leq
\int_{\Omega}\left(
\phi({x},{r}^{0}_{n}){S}({p}^{0}_{n}) + \varrho(x,r_{n}^{0})
\right)
[\vartheta^0_{n}]^{2}
{\,{\rm d}x}
\nonumber
\\
&
\quad
+
{h}
\sum_{i=1}^{j}
\int_{\Omega}
\left(
C_1
+
C_2 [\vartheta_{n}^{i}]^2
\right)
 C_{f}
{\,{\rm d}x}.
\end{align*}
By \eqref{cond:density}, \eqref{con12c}, \eqref{bound_phi}
and \eqref{est:p_uniform_bound},
the above estimate may be simplified as
\begin{equation*}
\int_{\Omega}
|\vartheta^{j}_{n}|^{2}
{\,{\rm d}x}
+
{h}
\sum_{i=1}^{j}
\int_{\Omega}
| \nabla \vartheta_{n}^{i} |^2
{\,{\rm d}x}
\leq
C_1
+
C_2
T
+
C_3
{h}
\sum_{i=1}^{j}
\int_{\Omega}
|\vartheta_{n}^{i}|^2
{\,{\rm d}x}.
\end{equation*}
As before,
we can now use the Gronwall's inequality.
As a consequence, we obtain
\begin{equation}
\label{est:energy_temperature}
\int_{\Omega}
|\vartheta^{j}_{n}|^{2}
{\,{\rm d}x}
+
{h}
\sum_{i=1}^{j}
\int_{\Omega}
| \nabla \vartheta_{n}^{i} |^2
{\,{\rm d}x}
\leq
C,
\qquad
j=1,2,\dots,n.
\end{equation}
Finally,
using Assumptions (iv) and (vi), \eqref{est:energy_pressure},
\eqref{est:energy_concentration}
and
\eqref{est:energy_temperature}
and applying \cite[Proposition~1.28]{Roubicek2005},
the uniform estimate
\begin{equation}
\label{est:energy_memory_function}
{h}
\sum_{i=1}^{j}
\int_{\Omega}
| \nabla {r}_{n}^{i} |^2
{\,{\rm d}x}
\leq
C,
\qquad
j=1,2,\dots,n,
\end{equation}
can be obtained directly from \eqref{approximate_problem_04a}.
Note that
\eqref{est:energy_memory_function} together with
\eqref{est:unform_bound_r_01}
yields
\begin{equation}
\label{est:energy_memory_function_b}
{h}
\sum_{i=1}^{j}
\int_{\Omega}
\|{r}_{n}^{i} \|^2_{W^{1,2}(\Omega)}
{\,{\rm d}x}
\leq
C,
\qquad
j=1,2,\dots,n.
\end{equation}
Moreover, from \eqref{approximate_problem_04a}
(see also \eqref{eq:mamory_mod_01})
one observes immediately that
\begin{equation*}
\frac{r_{n}^{i} - r_{n}^{i-1}}{h}
=
f({x},{p}_{n}^{i},{c}_{n}^{i-1},\vartheta_{n}^{i-1},{r}_{n}^{i-1})
\end{equation*}
and therefore,
in view of \eqref{assum:bound_f}, we have
\begin{equation}\label{eq:mamory_mod_04}
\left\|
\frac{r_{n}^{i} - r_{n}^{i-1}}{h}
\right\|_{L^{\infty}(\Omega)}
\leq
C_{f},
\quad
i=1,2,\dots,n.
\end{equation}


\bigskip

\subsubsection{Further estimates}
We now derive additional auxiliary estimates which will be used in the following section.
Such estimates play the crucial role in compactness arguments
(see \cite[Lemma~1.9]{AltLuckhaus1983})
and taking the limit $n \rightarrow +\infty$.

Let us sum up \eqref{approximate_problem_01}
for $i=j+1,\dots,j+k$ and then put
$\zeta = {p}^{j+k}_{n} - {p}^{j}_{n}$. This leads to
\begin{align}
\label{est:701}
&
\quad
\int_{\Omega}
\left[
\phi({x},{r}^{j+k}_{n}){S}({p}^{j+k}_{n}) - \phi({x},{r}^{j}_{n}){S}({p}^{j}_{n})
\right]
\left(
{p}^{j+k}_{n}
-
{p}^{j}_{n}
\right)
{\,{\rm d}x}
\nonumber
\\
&
\quad
+
h
\sum_{i=j+1}^{j+k}
\int_{\Omega}
a({x},{{p}_{n}^{i}},\vartheta_{n}^{i-1},{r}_{n}^{i-1}) \nabla {p}_{n}^{i}
\cdot\nabla\left(
{p}^{j+k}_{n}
-
{p}^{j}_{n}
\right)
{\,{\rm d}x}
\nonumber
\\
&
=
h
\sum_{i=j+1}^{j+k}
\int_{\Omega}
\alpha_1 f({x},{{p}_{n}^{i}},{c}_{n}^{i-1},\vartheta_{n}^{i-1},{r}_{n}^{i-1})
\left(
{p}^{j+k}_{n}
-
{p}^{j}_{n}
\right)
{\,{\rm d}x}.
\end{align}
From this and in view of \eqref{con12a} and \eqref{assum:bound_f} we have
\begin{align}
\label{est:702}
&
\quad
\int_{\Omega}
\left[
\phi({x},{r}^{j+k}_{n}){S}({p}^{j+k}_{n}) - \phi({x},{r}^{j}_{n}){S}({p}^{j}_{n})
\right]
\left(
{p}^{j+k}_{n}
-
{p}^{j}_{n}
\right)
{\,{\rm d}x}
\nonumber
\\
&
\leq
C_1
h
 \sum_{i=j+1}^{j+k}
 \int_{\Omega}
 |\nabla {p}_{n}^{i}|
 |\nabla\left({p}^{j+k}_{n}-{p}^{j}_{n}
\right)|
{\,{\rm d}x}
\nonumber
\\
&
\quad
+
k h
|\alpha_1|C_f
\int_{\Omega}
\left|
{p}^{j+k}_{n}
-
{p}^{j}_{n}
\right|
{\,{\rm d}x}.
\end{align}
Using \eqref{lipschitz_phi} and \eqref{approximate_problem_04a},
the above estimate can be further rewritten as
\begin{align}
\label{est:703}
&
\quad
\int_{\Omega}
\phi({x},{r}^{j+k}_{n})
\left[
{S}({p}^{j+k}_{n})
-
{S}({p}^{j}_{n})
\right]
\left(
{p}^{j+k}_{n}
-
{p}^{j}_{n}
\right)
{\,{\rm d}x}
\nonumber
\\
&
\leq
C_1
h
 \sum_{i=j+1}^{j+k}
 \int_{\Omega}
 |\nabla {p}_{n}^{i}|
 |\nabla\left({p}^{j+k}_{n}-{p}^{j}_{n}
\right)|
{\,{\rm d}x}
\nonumber
\\
&
\quad
+
\left(
k h
|\alpha_1|C_f
+
khS_s C_{\phi} C_f
\right)
\int_{\Omega}
\left|
{p}^{j+k}_{n}
-
{p}^{j}_{n}
\right|
{\,{\rm d}x}.
\end{align}
Again, we sum \eqref{est:703} for $j=1,\dots,n-k$, multiply it by $h$ and
use \eqref{bound_phi} and \eqref{est:energy_pressure}
to arrive at
\begin{equation}
\label{est:704}
h
 \sum_{j=1}^{n-k}
\int_{\Omega}
\left[
{S}({p}^{j+k}_{n})
-
{S}({p}^{j}_{n})
\right]
\left(
{p}^{j+k}_{n}
-
{p}^{j}_{n}
\right)
{\,{\rm d}x}
\leq
C
k
h,
\qquad
0\leq k < n.
\end{equation}
Further,
using the same arguments as
in \eqref{est:701}--\eqref{est:704},      we arrive at
\begin{equation}\label{est:705}
h \sum_{j=1}^{n-k}
\int_{\Omega}
\left|
{c}^{j+k}_{n}
-
{c}^{j}_{n}
\right|^2
{\,{\rm d}x}
\leq
C
k
h.
\end{equation}
Finally,
derivation similar to that presented above
leads to
\begin{equation}\label{est:706}
h \sum_{j=1}^{n-k}
\int_{\Omega}
\left|
{\vartheta}^{j+k}_{n}
-
{\vartheta}^{j}_{n}
\right|^2
{\,{\rm d}x}
\leq
C
k
h.
\end{equation}
At the same time,
it is easily deduced from \eqref{eq:mamory_mod_01} that
\begin{equation}\label{est:707}
h \sum_{j=1}^{n-k}
\int_{\Omega}
\left|
{r}^{j+k}_{n}
-
{r}^{j}_{n}
\right|^2
{\,{\rm d}x}
\leq
C
k
h.
\end{equation}

\subsection{Temporal interpolants and uniform estimates}
By means of the sequences ${p}^{i}_{n},{c}^{i}_{n},\vartheta^{i}_{n},{r}^{i}_{n}$ constructed
in Section~\ref{sec:approximations}, we define
the piecewise constant interpolants
$\bar{\varphi}_{n}(t) = \varphi^{i}_{n}$ for $t \in ((i - 1){h}, i{h}]$
and, in addition, we extend $\bar{\varphi}_{n}$ for $t\leq 0$ by
$
\bar{\varphi}_{n}(t) = \varphi_{0}$ for $t \in (-{h}, 0]$.
Here, $\varphi^{i}_{n}$ stands for ${p}^{i}_{n},{c}^{i}_{n},\vartheta^{i}_{n}$ or ${r}^{i}_{n}$.

For a function $\varphi$ we often use the simplified notation
$\varphi := \varphi(t)$, $\varphi_{h}(t) := \varphi(t-{h})$,
$\partial_t^{-{h}}\varphi(t) := \frac{\varphi(t) - \varphi(t-{h})}{h}$,
$\partial_t^{h}\varphi(t) := \frac{\varphi(t+{h}) - \varphi(t)}{h}$.
Then,
following \eqref{approximate_problem_01}--\eqref{approximate_problem_03},
the piecewise constant time interpolants
$\bar{p}_{n} \in L^{\infty}(I;W_{\Gamma_D}^{1,s}(\Omega))$,
$\bar{c}_{n} \in L^{\infty}(I;W_{\Gamma_D}^{1,s}(\Omega))$
and
$\bar{\vartheta}_{n} \in L^{\infty}(I;W_{\Gamma_D}^{1,s}(\Omega))$
(with some $s>2$)
satisfy the equations
\begin{align}
\label{eq:1001}
&
\int_{\Omega}
\partial_t^{-{h}} [\phi({x},\bar{r}_{n}(t)){S}(\bar{p}_{n}(t))] \zeta
{\,{\rm d}x}
\nonumber
\\
&
+
\int_{\Omega}
a(x,\bar{p}_{n}(t),\bar{\vartheta}_{n}(t-{h}),\bar{r}_{n}(t-{h}))
\nabla \bar{p}_{n}(t)
\cdot\nabla \zeta
{\,{\rm d}x}
\nonumber
\\
=
&
\int_{\Omega}
\alpha_1 f({x},\bar{p}_{n}(t),\bar{c}_{n}(t-{h}),\bar{\vartheta}_{n}(t-{h})),\bar{r}_{n}(t-{h})))
\zeta
{\,{\rm d}x}
\end{align}
for any $\zeta \in {W_{\Gamma_D}^{1,2}(\Omega)}$,
\begin{align}\label{eq:1002}
&
\quad
\int_{\Omega}
\partial_t^{-{h}}  [ \phi({x},\bar{r}_{n}(t)){S}(\bar{p}_{n}(t))\bar{c}_{n}(t) ] \eta
{\,{\rm d}x}
\nonumber
\\
&
\quad
+
\int_{\Omega}
\phi({x},\bar{r}_{n}(t)){S}(\bar{p}_{n}(t))D_w(\bar{p}_{n}(t-{h})) \nabla \bar{c}_{n}(t) \cdot \nabla\eta
{\,{\rm d}x}
\nonumber
\\
&
\quad
+
\int_{\Omega}
\bar{c}_{n}(t)
a(x,\bar{p}_{n}(t),\bar{\vartheta}_{n}(t-{h}),\bar{r}_{n}(t-{h}))
\nabla \bar{p}_{n}(t)
\cdot \nabla\eta
{\,{\rm d}x}
\nonumber
\\
&
=
0
\end{align}
for any $\eta \in {W_{\Gamma_D}^{1,2}(\Omega)}$
and

\begin{align}\label{eq:1003}
&
\int_{\Omega}
\partial_t^{-{h}} \left[ \phi({x},\bar{r}_{n}(t)){S}(\bar{p}_{n}(t))\bar{\vartheta}_{n}(t)
+
  {\varrho}({x},\bar{r}_{n}(t)) \bar{\vartheta}_{n}(t)
\right] \psi
{\,{\rm d}x}
\nonumber
\\
&
+
\int_{\Omega}
\lambda(x,\bar{p}_{n}(t-{h}),\bar{\vartheta}_{n}(t-{h}),\bar{r}_{n}(t-{h})) \nabla \bar{\vartheta}_{n}(t) \cdot \nabla\psi
{\,{\rm d}x}
\nonumber
\\
&
+
\int_{\Omega}
\bar{\vartheta}_{n}(t)
a(x,\bar{p}_{n}(t),\bar{\vartheta}_{n}(t-{h}),\bar{r}_{n}(t-{h}))
\nabla \bar{p}_{n}(t)
\cdot \nabla\psi
{\,{\rm d}x}
\nonumber
\\
=
&
\int_{\Omega}
\alpha_2 f({x},\bar{p}_{n}(t),\bar{c}_{n}(t-{h}),\bar{\vartheta}_{n}(t-{h})),\bar{r}_{n}(t-{h})))
\zeta
{\,{\rm d}x}
\end{align}
for any $\psi \in {W_{\Gamma_D}^{1,2}(\Omega)}$.
Finally, from \eqref{approximate_problem_04a} and \eqref{approximate_problem_04b} we have
\begin{equation}\label{eq:1004a}
\bar{R}_{n}(t)
=
f({x},\bar{p}_{n}(t),\bar{c}_{n}(t-{h}),\bar{\vartheta}_{n}(t-{h})),\bar{r}_{n}(t-{h})))
\end{equation}
for all $t \in [0,T]$, where
\begin{equation*}
\bar{R}_{n}(t)
=
\frac{r_{n}^{i} - r_{n}^{i-1}}{h}
\quad
\textmd{ for }  t \in ((i - 1){h}, i{h}],
\quad
i=1,2,\dots,n
\end{equation*}
and
$\bar{R}_{n}(0) = r_{n}^{1}/h$, $r_{n}^{0}=0$.
To be able to say something about the behaviour of the sequences
$\left\{ \bar{p}_{n} \right\}$,
$\left\{ \bar{c}_{n} \right\}$,
$\left\{ \bar{\vartheta}_{n} \right\}$,
$\left\{ \bar{R}_{n} \right\}$,
and
$\left\{ \bar{r}_{n} \right\}$,
we now present some apriori estimates for solutions of
the problem \eqref{eq:1001}--\eqref{eq:1004a}.

To this aim, from
\eqref{est:unform_bound_w_05},
\eqref{est:unform_bound_theta_05},
\eqref{est:unform_bound_r_01},
\eqref{est:energy_pressure}, \eqref{est:energy_concentration},
\eqref{est:energy_temperature} and \eqref{est:energy_memory_function}
we see immediately that
\begin{align}
\sup_{0 \leq t \leq T}
\int_{\Omega}
\Theta_{S}(\bar{p}_{n}(t))
{{\rm d}x}
+
\int_0^T \|\bar{p}_{n}(t)\|^2_{W^{1,2}_{\Gamma_D}(\Omega)} {\rm d}t
&\leq C,
\label{est:801}
\\
\int_0^T \|\bar{c}_{n}(t)\|^2_{W^{1,2}_{\Gamma_D}(\Omega)} {\rm d}t
&\leq C,
\label{est:802}
\\
\int_0^T \|\bar{\vartheta}_{n}(t)\|^2_{W^{1,2}_{\Gamma_D}(\Omega)} {\rm d}t
&\leq C,
\label{est:803}
\\
\int_0^T \|\bar{r}_{n}(t)\|^2_{W^{1,2}_{\Gamma_D}(\Omega)} {\rm d}t
&\leq C,
\label{est:803b}
\\
\|\bar{c}_{n}\|_{L^{\infty}({Q_T})}
&\leq C,
\label{est:804}
\\
\|\bar{\vartheta}_{n}\|_{L^{\infty}({Q_T})}
&\leq C,
\label{est:805}
\\
\|\bar{r}_{n}\|_{L^{\infty}({Q_T})}
&\leq C.
\label{est:805}
\end{align}
Moreover,
the estimates \eqref{est:704}--\eqref{est:707}
can be rewritten in the form
\begin{align}
&
\int_0^{T-k{h}}
\left[
{S}(\bar{p}_{n}(t+k{h}))
-
{S}(\bar{p}_{n}(t))
\right]
\left(
\bar{p}_{n}(t+k{h})
-
\bar{p}_{n}(t)
\right)
{\rm d}t
\leq C k {h},
\label{est:806}
\\
&
\int_0^{T-k{h}}
|\bar{c}_{n}(t+k{h}) - \bar{c}_{n}(t) |^2
{\rm d}t
\leq C k {h},
\label{est:807}
\\
&
\int_0^{T-k{h}}
|\bar{\vartheta}_{n}(t+k{h}) - \bar{\vartheta}_{n}(t)|^2
{\rm d}t
\leq {C} k {h},
\label{est:808}
\\
&
\int_0^{T-k{h}}
|\bar{r}_{n}(t+k{h}) - \bar{r}_{n}(t)|^2
{\rm d}t
\leq {C} k {h}.
\label{est:809}
\end{align}

Now we are ready to complete the proof of the main result of this paper
which is the conclusion of the following section.


\subsection{Passage to the limit}
\label{subsec:limit}

The a-priori estimates
\eqref{est:801}--\eqref{est:805}
allow us to conclude that there exist
$p \in L^2(I;W^{1,2}_{\Gamma_D}(\Omega))$,
$c \in L^2(I;W^{1,2}_{\Gamma_D}(\Omega)) \cap L^{\infty}({Q_T})$,
$\vartheta \in L^2(I;W^{1,2}_{\Gamma_D}(\Omega)) \cap L^{\infty}({Q_T})$
and
${r} \in L^{\infty}({Q_T})$,
such that,
letting $n \rightarrow +\infty$ (along a selected subsequence),
\begin{align*}
\bar{p}_{n} & \rightharpoonup  {p}
&&
\textrm{weakly in } L^2(I;W^{1,2}_{\Gamma_D}(\Omega)),
&&
\\
\bar{c}_{n} & \rightharpoonup  c
&&
\textrm{weakly in } L^2(I;W^{1,2}_{\Gamma_D}(\Omega)),
\\
\bar{c}_{n} & \rightharpoonup  c
&&
\textrm{weakly star in } L^{\infty}({Q_T}),
\\
\bar{\vartheta}_{n} & \rightharpoonup  \vartheta
&&
\textrm{weakly in } L^2(I;W^{1,2}_{\Gamma_D}(\Omega)),
\\
\bar{\vartheta}_{n}  & \rightharpoonup  \vartheta
&&
\textrm{weakly star in } L^{\infty}({Q_T}),
\\
\bar{r}_{n} & \rightharpoonup  {r}
&&
\textrm{weakly in } L^2(I;W^{1,2}(\Omega)),
\\
\bar{r}_{n}  & \rightharpoonup  {r}
&&
\textrm{weakly star in } L^{\infty}({Q_T}).
\end{align*}
Thus, we derived fundamental properties of the functions $p$, $c$, $\vartheta$ and $r$.
The crucial step to ensure that $p$, $c$, $\vartheta$ and $r$ solve the problem
\eqref{weak_form_01}--\eqref{eq:memory_weak_form}
consists in showing that the sequences $\left\{ \bar{p}_{n} \right\}$,
$\left\{ \bar{c}_{n} \right\}$,
$\left\{ \bar{\vartheta}_{n} \right\}$
and
$\left\{ \bar{r}_{n} \right\}$,
converge not only weakly in appropriate Bochner spaces, but even
almost everywhere on $Q_T$.

To this aim,
in view of \eqref{est:801} and \eqref{est:806},
using the
compactness argument one can show in the same way
as in \cite[Lemma~1.9]{AltLuckhaus1983} and \cite[Eqs. (2.10)--(2.12)]{FiloKacur1995}
that
\begin{equation}\label{eq555}
S(\bar{p}_{n})   \rightarrow   S(p) \quad \textmd{ in } L^1(Q_T)
\end{equation}
and almost everywhere on $Q_T$.
Since $S$ is strictly monotone,
it follows from \eqref{eq555}
that \cite[Proposition 3.35]{Kacur1990a}
\begin{equation}
\label{conv:p00}
\bar{p}_{n}  \rightarrow  p
\qquad
\textrm{ almost everywhere on } Q_T.
\end{equation}
By similar arguments,
using estimates
\eqref{est:802}--\eqref{est:805} and \eqref{est:807}--\eqref{est:809},
we have
\begin{align}
\bar{c}_{n} & \rightarrow c
&&
\textrm{almost everywhere on } Q_T,
\label{ae_conv_c}
\\
\bar{\vartheta}_{n} & \rightarrow \vartheta
&&
\textrm{almost everywhere on } Q_T,
\label{ae_conv_vartheta}
\\
\bar{r}_{n} & \rightarrow {r}
&&
\textrm{almost everywhere on } Q_T.
\label{ae_conv_r}
\end{align}
Finally,
in consequence of \eqref{eq:mamory_mod_04},
the norms
$\| \bar{R}_{n}(t) \|_{L^{\infty}(\Omega)}$
are uniformly bounded with respect to $n$ and $t$.
Hence, there exists $R \in L^{2}(I,L^{\infty}(\Omega))$,
such that
\begin{equation}\label{weak_conv_R}
\bar{R}_{n}  \rightharpoonup  R
\qquad
\textrm{weakly star in } L^2(I;L^{\infty}(\Omega))
\end{equation}
and $R$ can be shown to satisfy (see \cite[Chapter~11]{Rektorys})
\begin{equation}
\int_0^t R(s){\rm d}s
=
r(t)
\end{equation}
and
\begin{equation}
R(t)
=
r'(t)
\textrm{ in } L^2(I;L^{\infty}(\Omega)).
\end{equation}
It follows that
\begin{equation*}
r \in C([0,T];L^{\infty}(\Omega))
\quad
(\textrm{and even } AC([0,T];L^{\infty}(\Omega)))
\end{equation*}
and $r(0)=0$.
In view of \eqref{conv:p00}--\eqref{ae_conv_r}
and the assumption (iv) we have
\begin{equation*}
f({x},\bar{p}_{n}(t),\bar{c}_{n}(t-{h}),\bar{\vartheta}_{n}(t-{h})),\bar{r}_{n}(t-{h})))
\rightarrow
f({x},{p}(t),{c}(t),{\vartheta}(t)),{r}(t)))
\end{equation*}
almost everywhere on  $Q_T$ and
on account of \eqref{eq:1004a} and \eqref{weak_conv_R}
we can write
\begin{equation*}
R = f
\textrm{ in } L^2(I;L^{\infty}(\Omega)).
\end{equation*}
This leads to \eqref{eq:memory_weak_form}. Moreover,
the above established convergences
are sufficient for taking the limit $n \rightarrow \infty$
in \eqref{eq:1001}--\eqref{eq:1003}
(along a selected subsequence) to get the weak solution
of the system \eqref{strong:eq1a}--\eqref{strong:eq1l}
in the sense of Definition~\ref{def_weak_solution}.
This completes the proof of the main result stated by Theorem~\ref{main_result}.


\subsection*{Acknowledgment}
The first author of this work has been supported by the project GA\v{C}R~16-20008S.
The second author of this work has been supported by the
Croatian Science Foundation (scientific project 3955: Mathematical modeling and numerical simulations of processes in thin or porous domains).

\end{document}